\documentclass[12pt, reqno]{amsart}
\makeatletter
\@namedef{subjclassname@1991}{$\mathrm{1991}$ Mathematics Subject Classification}
\@namedef{subjclassname@2000}{$\mathrm{2000}$ Mathematics Subject Classification}
\@namedef{subjclassname@2010}{$\mathrm{2010}$ Mathematics Subject Classification}
\@namedef{subjclassname@2020}{$\mathrm{2020}$ Mathematics Subject Classification}
\makeatother
\usepackage{amsmath,amsthm, amscd, amsfonts, amssymb, graphicx, color}
\usepackage[bookmarksnumbered, colorlinks, plainpages,linkcolor=blue,urlcolor=blue,citecolor=blue]{hyperref}
\newtheorem{thm}{Theorem}[section]
\newtheorem{cor}[thm]{Corollary}
\newtheorem{lem}[thm]{Lemma}

\newtheorem{defn}[thm]{Definition}

\newtheorem{exam}[thm]{Example}
\numberwithin{equation}{section}

\begin{document}

\title{generalized EP properties of $(b,c)$-inverses}

\author{Huanyin Chen}
\address{School of Big Data, Fuzhou University of International Studies and Trade, Fuzhou 350202, China}
\email{<huanyinchenfz@163.com>}

\subjclass[2020]{16U50, 15A09, 46H05.} \keywords{weak group inverse; $(b,c)$-inverse; $(b,c)$-EP inverse; generalized group inverse; Banach algebra.}

\begin{abstract}In this paper, we introduce the notion of the generalized $(b,c)$-EP inverse within the framework of a *-Banach algebra. This concept emerges as a logical extension of the weak group inverse and EP-like property, which is applicable to complex matrices and bounded linear operators in Hilbert spaces. We provide its characterizations in relation to its associated decomposition and the generalized Drazin inverse. A polar-like property for the generalized $(b,c)$-EP inverse is presented. Furthermore, we reveal an intrinsic connection between the generalized $(b,b)$-EP inverse and the converse law for the generalized group inverse.\end{abstract}

\maketitle

\section{Introduction}

A Banach algebra is called a Banach *-algebra if there exists an involution $*: x\to x^*$ satisfying $(x+y)^*=x^*+y^*, (\lambda x)^*=\overline{\lambda} x^*, (xy)^*=y^*x^*, (x^*)^*=x$. An element $a\in \mathcal{A}$ has group inverse provided that there exists $x\in \mathcal{A}$ such that $$ax^2=x, ax=xa, xa^2=a.$$ Such $x$ is unique if exists, denoted by $a^{\#}$, and called the group inverse of $a$. Evidently, a square complex matrix $A$ has group inverse if and only if $rank(A)=rank(A^2)$ (see~\cite{B,C3,M}).

Let $\mathcal{A}$ be a Banach *-algebra. The involution $*$ is proper if $x^*x=0\Longrightarrow x=0$ for any $x\in \mathcal{A}$.
Let ${\Bbb C}^{n\times n}$ be the Banach algebra of all $n\times n$ complex matrices, with conjugate transpose $*$ as the involution.
Then the involution $*$ is proper.

Let $\mathcal{A}$ be a Banach *-algebra with a proper involution $*$. An element $a\in \mathcal{A}$ has weak group inverse if there exist $x\in \mathcal{A}$ and $n\in \Bbb{N}$ such that $$ax^2=x, (a^*a^2x)^*=a^*a^2x, xa^{n+1}=a^n.$$ If such $x$ exists, it is unique, and denote it by $a^{\tiny\textcircled{W}}$ (see~\cite{C2,D1,MD5,W1,Z1,Z2}). Mosic and Zhang introduced and studied weak group inverse for a Hilbert space operator $A$ in $\mathcal{B}(X)^d$ (see~\cite{MD4}).

Recently, Chen and Sheibani introduced and studied a new generalized inverse as a generalization of group and weak group inverse. An element $a$ in a Banach *-algebra has generalized group inverse if there exists $x\in \mathcal{A}$ such that $$x=ax^2, (a^*a^2x)^*=a^*a^2x, \lim\limits_{n\to \infty}||a^n-xa^{n+1}||^{\frac{1}{n}}=0.$$ Such $x$ is unique if it exists and is called the generalized group inverse of $a$. We denote it by $a^{\tiny\textcircled{g}}$. Many properties of generalized group inverse were presented in ~\cite{C1,C7,C4}. Evidently, $\{ ~\mbox{group inverse}\}\subsetneq \{ ~\mbox{weak group inverse}\}\subsetneq \{ ~\mbox{generalized group inverse}\}$.

Let $a,b,c\in R$. An element $a$ has $(b,c)$-inverse provide that there exists $x\in R$ such that $$xab=b, cax=c ~\mbox{and}~ x\in bRx\bigcap xRc.$$ If such $x$ exists, it is unique and denote it by $a^{(b,c)}$. It is well known that $x=a^{(b,c)}~\mbox{if and only if} ~x=xax, xR=bR,~\mbox{and} ~Rx=Rc.$ We refer the reader to ~\cite{BK,DM,K1,K,S} for various properties of $(b,c)$-inverse.

Combing group inverse and $(b,c)$-inverse, Wu and Chen introduced and studied EP-like properties for the $(b,c)$-inverse (see~\cite{WC}).

\begin{defn} An element $a\in \mathcal{A}$ has $(b,c)$-EP inverse if $a\in \mathcal{A}^{\#}$ and $a^{\#}=a^{(b,c)}$. We use $\mathcal{A}_{b,c}^{\#}$ to denote the set of all $(b,c)$-EP invertible element in $\mathcal{A}$.\end{defn}

Many properties of $(b,c)$-EP inverse are established in ~\cite{WC}. Various related results can be found in ~\cite{D1,D2}.

The objective of this paper is to examine when the generalized group inverse of a Banach element can be represented as its $(b,c)$-inverse. We adopt

\begin{defn} An element $a\in \mathcal{A}$ has generalized $(b,c)$-EP inverse if $a\in \mathcal{A}^{\tiny\textcircled{g}}$ and $a^{\tiny\textcircled{g}}=a^{(b,c)}$. We use $a_{b,c}^{\tiny\textcircled{g}}$ to stands for $a^{\tiny\textcircled{g}}$. The set of all generalized $(b,c)$-EP invertible elements in $\mathcal{A}$ is denoted by $\mathcal{A}_{b,c}^{\tiny\textcircled{g}}$.\end{defn}

In Section 2, we characterize generalized $(b,c)$-EP inverse of an element by virtue of a kind of its associated decomposition. We prove that $a\in \mathcal{A}$ has generalized $(b,c)$-EP inverse if and only if there exist $x,y\in \mathcal{A}$ such that
$$a=x+y, x^*y=yx=0, x\in \mathcal{A}~\mbox{has} (b,c)-EP ~\mbox{inverse}, y\in \mathcal{A}^{qnil}.$$ Here, $\mathcal{A}^{qnil}=\{x\in \mathcal{A}~\mid~ \lim\limits_{n\to \infty}\parallel x^n\parallel^{\frac{1}{n}}=0\}.$ As is well known, $x\in \mathcal{A}^{qnil}$ if and only if $1+\lambda x\in \mathcal{A}$ is invertible for any $\lambda\in {\Bbb C}$.

Recall that $a\in \mathcal{A}$ has g-Drazin inverse (i.e., generalized Drazin inverse) if there exists $x\in \mathcal{A}$ such that $$ax^2=x, ax=xa, a-a^2x\in \mathcal{A}^{qnil}.$$ Such $x$ is unique, if exists, and denote it by $a^d$. The g-Drazin inverse plays an important role in matrix and operator theory (see~\cite{CM2}). In Section 3, we are concerned with the generalized EP properties by using the g-Drazin invertibility.
We studied the generalized EP inverse of a Banach element by its polar-like property. We prove that $a\in \mathcal{A}_{b,c}^{\tiny\textcircled{g}}$ if and only if $a\in \mathcal{A}^{d}$ and there exists an idempotent $p\in \mathcal{A}$ such that $$a+p\in \mathcal{A}^{-1}, 1-p\in \mathcal{A}_{b,c}^{\#}, (a^*ap)^*=a^*ap~\mbox{and} ~pa=pap\in \mathcal{A}^{qnil}.$$ An element $a\in \mathcal{A}$ has weak $(b,c)$-EP inverse if $a$ has weak group inverse and $a^{\tiny\textcircled{W}}=a^{(b,c)}$, and we denote it by $a_{(b,c)}^{\tiny\textcircled{W}}$. As an application, we characterize when the weak group inverse of a Banach element is its $(b,c)$-inverse.

In Section 4, we establish the relations of the generalized $(b,c)$-EP inverse and its associated generalized inverses. We prove that
$a\in \mathcal{A}_{b,c}^{\tiny\textcircled{g}}$ if and only if $a\in \mathcal{A}^{\tiny\textcircled{g}}$ and $cab\in \mathcal{A}^{-}$ such that $a^{\tiny\textcircled{g}}=b(cab)^{-}c, b(cab)^{-}(cab)=b, (cab)(cab)^{-}c=c.$ Here, $x^{-}$ denotes the inner inverse of an element $x$ in $\mathcal{A}$.
Finally, we establish the necessary and sufficient conditions under which an element has generalized $(b,b)$-EP inverse and reveal an intrinsic connection between the generalized $(b,b)$-EP inverse and the converse law for the generalized group inverse.

Throughout the paper, all Banach algebras are complex with a proper involution $*$. We use $\mathcal{A}^{\#}, \mathcal{A}^{d}, \mathcal{A}^{\tiny\textcircled{g}}, \mathcal{A}^{\tiny\textcircled{d}}$ and $\mathcal{A}^{\tiny\textcircled{W}}$  to denote the sets of
all group invertible, g-Drazin invertible, generalized group invertible, generalized core invertible and weak group invertible elements in $\mathcal{A}$, respectively.

\section{generalized $(b,c)$-EP inverse}

The purpose of this section is to establish fundamental properties of the generalized group-$(b,c)$-inverse. We start by

\begin{thm} Let $a\in \mathcal{A}$. Then the following are equivalent:\end{thm}
\begin{enumerate}
\item [(1)] $a\in \mathcal{A}$ has generalized $(b,c)$-EP inverse.
\vspace{-.5mm}
\item [(2)] There exist $x,y\in \mathcal{A}$ such that
$$a=x+y, x^*y=yx=0, x\in \mathcal{A}_{b,c}^{\#}, y\in \mathcal{A}^{qnil}.$$
\vspace{-.5mm}
\item [(3)] There exists $x\in \mathcal{A}_{b,c}^{\#}$ such that $$x=ax^2, (a^*a^2x)^*=a^*a^2x, \lim\limits_{n\to \infty}||a^n-xa^{n+1}||^{\frac{1}{n}}=0.$$
\vspace{-.5mm}
\item [(4)] $a\in \mathcal{A}^{d}$ and there exists $x\in \mathcal{A}_{b,c}^{\#}$ such that $$x=ax^2, (a^d)^*a^2x=(a^d)^*a, \lim\limits_{n\to \infty}||a^n-xa^{n+1}||^{\frac{1}{n}}=0.$$
\end{enumerate}
\begin{proof} $(1)\Rightarrow (2)$ By hypothesis, $a\in \mathcal{A}^{\tiny\textcircled{g}}$ and
$a^{\tiny\textcircled{g}}=a^{(b,c)}$. Then we verify that
$$\begin{array}{rll}
a^{\tiny\textcircled{g}}ab&=&a^{\tiny\textcircled{g}}(aa^{\tiny\textcircled{g}}a)b=b,\\
caa^{\tiny\textcircled{g}}&=&c(aa^{\tiny\textcircled{g}}a)a^{\tiny\textcircled{g}}=c.
\end{array}$$
This implies that $a^{\tiny\textcircled{g}}=(aa^{\tiny\textcircled{g}}a)^{(b,c)}$.
In view of~\cite[Theorem 1.1]{C2}, there exist $x,y\in \mathcal{A}$ such that $$a=x+y, x^*y=yx=0, x\in
\mathcal{A}^{\#}, y\in \mathcal{A}^{qnil}.$$ Moreover, we check that
$$x^{\#}=a^{\tiny\textcircled{g}}=(aa^{\tiny\textcircled{g}}a)^{(b,c)}=x^{(b,c)}.
$$ Therefore $x\in \mathcal{A}_{b,c}^{\#}$. Accordingly, we have $a\in \mathcal{A}_{b,c}^{\tiny\textcircled{g}}$.

$(2)\Rightarrow (1)$ Since $a\in \mathcal{A}_{b,c}^{\tiny\textcircled{g}}$,
there exist $x,y\in \mathcal{A}$ such that $$a=x+y, x^*y=yx=0, x\in
\mathcal{A}_{b,c}^{\#}, y\in \mathcal{A}^{qnil}.$$ Then $x\in \mathcal{A}^{\#}$.
In view of ~\cite[Theorem 1.1]{C2}, $a\in \mathcal{A}^{\tiny\textcircled{g}}$ and
$$a^{\tiny\textcircled{g}}=x^{\#}=x^{(b,c)}=(aa^{\tiny\textcircled{g}}a)^{(b,c)}.$$ This implies that
$$\begin{array}{rll}
a^{\tiny\textcircled{g}}ab&=&a^{\tiny\textcircled{g}}(aa^{\tiny\textcircled{g}}a)b=b,\\
caa^{\tiny\textcircled{g}}&=&c(aa^{\tiny\textcircled{g}}a)a^{\tiny\textcircled{g}}=c.
\end{array}$$ Obviously, $a^{\tiny\textcircled{g}}=a^{\tiny\textcircled{g}}aa^{\tiny\textcircled{g}}$.
Therefore $a^{\tiny\textcircled{g}}=a^{(b,c)},$ as desired.

$(2)\Rightarrow (4)$  By hypothesis, $a$ has the generalized $(b,c)$-EP decomposition $a=a_1+a_2$.
Let $x=(a_1)_{b,c}^{\#}$. Then $ax^2=(a_1+a_2)(a_1^{\#})^2=a_1(a_1^{\#})^2=a_1^{\#}=x$.

Since $a_2a_1=0$, we have $a-xa^2=(a_1+a_2)-(a_1^{\#}a_1+a_1^{\#}a_2)(a_1+a_2)=
(1-a_1^{\#}a_1-a_1^{\#}a_2)a_2$, and then
$$\begin{array}{rl}
&||a^n-xa^{n+1}||^{\frac{1}{n}}\\
=&||(a-xa^2)a^{n-1}||^{\frac{1}{n}}\\
=&||(1-a_1^{\#}a_1-a_1^{\#}a_2)a_2(a_1+a_2)^{n-1}||^{\frac{1}{n}}\\
\leq &||1-a_1^{\#}a_1-a_1^{\#}a_2||^{\frac{1}{n}}||a_2^{n}||^{\frac{1}{n}}.
\end{array}$$
As $\lim\limits_{n\to \infty}||(a_2)^{n}||^{\frac{1}{n}}=0$, we derive
$\lim\limits_{n\to \infty}||a^n-xa^{n+1}||^{\frac{1}{n}}=0.$
Clearly, $a_2a_1=0, a_1a_1^{\pi}=0$ and $a_2^d=0$, and then $a\in \mathcal{A}^d$ and $a^d=a_1^{\#}+\sum\limits_{n=1}^{\infty}(a_1^{\#})^{n+1}a_2^n$ (see~\cite[Corollary 3.4]{CK}). We further check that
Then $$\begin{array}{rll}
(a^d)^*a_2&=&(a_1^{\#})^*a_2+\sum\limits_{n=1}^{\infty}[(a_1^{\#})^{n+1}a_2^n]^*a_2\\
&=&[(a_1^{\#})^2]^*(a_1^*a_2)+\sum\limits_{n=1}^{\infty}[(a_1^{\#})^{n+2}a_2^n]^*(a_1^*a_2)\\
&=&0;
\end{array}$$ hence, $(a^d)^*a_1=(a^d)^*a$.
Therefore $(a^d)^*a^2x=(a^d)^*(a_1+a_2)(a_1+a_2)a_1^{\#}=(a^d)^*a_1^2a_1^{\#}=(a^d)^*a_1=(a^d)^*a.$

$(4)\Rightarrow (3)$ By hypothesis, there exists $x\in \mathcal{A}_{b,c}^{\#}$ such that $$x=ax^2, (a^d)^*a^2x=(a^d)^*a, \lim\limits_{n\to \infty}||a^n-xa^{n+1}||^{\frac{1}{n}}=0.$$
Let $a_1=a^2x$ and $a_2=a-a^2x$. Then we verify that
$$\begin{array}{rll}
||a_2a_1||&=&||(a-a^2x)a^2x||=||a^3x-a^2xa^2x||\\
&=&||a^3x-a^2xa^{k+1}x^k||\\
&=&||a^3x-a^{k+2}x^k+a^2(a^k-xa^{k+1})x^k-a^{k+2}x^k||\\
&\leq&||a^3x-a^{k+2}x^k||+||a^2||||a^k-xa^{k+1}||||x^k-a^{k+2}x^k||.
\end{array}$$ Then $$\begin{array}{rll}
||a_2a_1||^{\frac{1}{k}}&\leq &||a^2||^{\frac{1}{k}}||a^k-xa^{k+1}||^{\frac{1}{k}}||x^k-a^{k+2}x^k||^{\frac{1}{k}}\\
&\leq &||a^2||^{\frac{1}{k}}||a^k-xa^{k+1}||^{\frac{1}{k}}(1+||a^2||^{\frac{1}{k}}||a||)||x||.
\end{array}$$ Therefore $\lim\limits_{k\to \infty}||a_2a_1||^{\frac{1}{k}}=0$, and then $a_2a_1=0$.

Moreover, we have
$$\begin{array}{rl}
&||a_1^*a_2||\\
=&||(a^2x)^*(a-a^2x)||=||(a^2x)^*a-(a^2x)^*a^2x||\\
=&||(a^2x)^*a-(a^{k}x^{k-1})^*a^2x||\\
=&||(a^2x)^*a-[(a^{k}-a^da^{k+1})x^{k-1}+a^da^{k+1}x^{k-1}]^*a^2x||\\
\leq&||(a^{k}-a^da^{k+1})^*||(x^{k-1})^*||||a^2x||+||(a^2x)^*a-[a^da^{k+1}x^{k-1}]^*a^2x||\\
\leq&||(a^{k}-a^da^{k+1})^*||(x^{k-1})^*||||a^2x||+||(a^2x)^*a-[a^{k+1}x^{k-1}]^*(a^d)^*a^2x||\\
\leq&||(a^{k}-a^da^{k+1})^*||(x^{k-1})^*||||a^2x||+||(a^2x)^*a-[a^{k+1}x^{k-1}]^*(a^d)^*a||\\
\leq&||(a^{k}-a^da^{k+1})^*||(x^{k-1})^*||||a^2x||+||(a^2x)^*a-[a^da^{k+1}x^{k-1}]^*a||\\
\leq&||(a^{k}-a^da^{k+1})^*||(x^{k-1})^*||||a^2x||+||(a^kx^{k-1})^*a-[a^da^{k+1}x^{k-1}]^*a||\\
\leq&||(a^{k}-a^da^{k+1})^*||(x^{k-1})^*||||a^2x||+||(a^k-a^da^{k+1})x^{k-1}]^*||||a||\\
\leq&||(a^{k}-a^da^{k+1})^*||||(x^{k-1})^*||||a^2x||+||(a^k-a^da^{k+1})^*||||(x^{k-1})^*||||a||.
\end{array}$$ Then $$\begin{array}{rll}
||a_1^*a_2||^{\frac{1}{k}}&\leq &||(a^{k}-a^da^{k+1})^*||^{\frac{1}{k}}||(x^{k-1})^*||^{\frac{1}{k}}||a^2x||^{\frac{1}{k}}\\
&+&||(a^k-a^da^{k+1})^*||^{\frac{1}{k}}||(x^{k-1})^*||^{\frac{1}{k}}||a||^{\frac{1}{k}}.
\end{array}$$ Then $\lim\limits_{k\to \infty}||a_1^*a_2||^{\frac{1}{k}}=0$, and so $a_1^*a_2=0$.

Obviously, we have $$\begin{array}{rll}
a_1x&=&a^2x^2=a(ax^2)=ax=(xa^2)x=x(a^2x)=xa_1,\\
a_1xa_1&=&ax(a^2x)=a^2x=a_1,\\
xa_1x&=&a_1x^2=(a^2x)x^2=a(ax^2)x=ax^2=x.
\end{array}$$ Hence $a_1^{\#}=x$.

Claim 1. $x\in b\mathcal{A}x\bigcap x\mathcal{A}c.$

$x=a_1^{\#}=b\mathcal{A}a_1^{\#}\bigcap a_1^{\#}\mathcal{A}c$.

Claim 2. $xa_1b=b$.

$$x(a^2x)b=axb=x^{\#}xb=b.$$

Claim 3. $ca_1x=c$.

$$ca_1x=c(a^2x)x=cax=cxx^{\#}=c.$$

Therefore $a_1^{\#}=x=(a_1)^{(b,c)}$; and so $a_1\in \mathcal{A}_{b,c}^{\#}$.

Obviously, $a=a^2x+a-a^2x=a_1+a_2$. Then $a^*a^2x=(a_1^*+a_2^*)(a_1+a_2)^2a_1^{\#}=(a_1^*+a_2^*)a_1=a_1^*a_1+(a_1^*a_2)^*=a_1^*a_1$.
Accordingly, $(a^*a^2x)^*=(a_1^*a_1)^*=a_1^*a_1=a^*a^2x$, as required.

$(3)\Rightarrow (2)$ By hypotheses, we have $z\in \mathcal{A}_{b,c}^{\#}$ such that $$z=az^2, (a^*a^2z)^*=a^*a^2z, \lim\limits_{n\to \infty}||a^n-za^{n+1}||^{\frac{1}{n}}=0.$$
For any $n\in {\Bbb N}$, we have $az=a(az^2)=a^2z^2=a^2(az^2)z=a^3z^3=\cdots =a^nz^n=\cdots =a^{n+1}z^{n+1}$.
Thus, we prove that $$\begin{array}{rll}
||z-zaz||&=&||(az)z-zaz||\\
&=&||(a^nz^n)z-z(a^{n+1}z^{n+1})||\\
&=&||(a^n-za^{n+1})z^{n+1}||.
\end{array}$$ Hence, $$\begin{array}{rll}
||z-zaz||^{\frac{1}{n}}&\leq &||(a^n-za^{n+1})||^{\frac{1}{n}}||z||^{\frac{n+1}{n}}.
\end{array}$$
This implies that $\lim\limits_{n\to \infty}||z-zaz||^{\frac{1}{n}}=0,$ whence, $z=zaz$.

Set $x=a^2z$ and $y=a-a^2z.$ Then $a=x+y$.
We check that $$\begin{array}{rcl}
(a-za^2)z&=&(a-za^2)az^2\\
&=&(a-za^2)a^2z^3\\
&\vdots&\\
&=&(a-za^2)a^{n-1}z^n\\
&=&(a^n-za^{n-1})z^n.
\end{array}$$
Therefore $$||(a-za^2)z||^{\frac{1}{n}}\leq ||a^n-za^{n-1}|^{\frac{1}{n}}|||z^n||^{\frac{1}{n}}.$$
Since $\lim\limits_{n\to \infty}||a^n-za^{n+1}||^{\frac{1}{n}}=0,$ we have $\lim\limits_{n\to \infty}||(a-za^2)z||^{\frac{1}{n}}=0.$
This implies that $(a-za^2)z=0$.

We claim that $x$ has group inverse. Evidently, we verify that
$$\begin{array}{rll}
zx^2&=&za^2za^2z=a(za^2z)=a^2z=x,\\
xz^2&=&(a^2z)z^2=(a^2z^2)z=az^2=z,\\
xz&=&a^2z^2=az=za^2z=zx.
\end{array}$$ Hence, $x\in \mathcal{A}^{\#}$ and $x^{\#}=z$.

We check that
$$\begin{array}{rll}
||(a-za^2)^{n+1}||^{\frac{1}{n+1}}&=&||(a-za^2)^n(a-za^2)||^{\frac{1}{n+1}}\\
&=&||(a-za^2)^{n-1}(a-za^2)a||^{\frac{1}{n+1}}\\
&=&||(a-za^2)^{n-1}a^2||^{\frac{1}{n+1}}\\
&\vdots&\\
&=&||(a-za^2)a^n||^{\frac{1}{n+1}}\\
&\leq &\big[||a^n-za^{n+1}||^{\frac{1}{n}}\big]^{\frac{n}{n+1}}||a^n||^{\frac{1}{n+1}}.
\end{array}$$ Accordingly, $\lim\limits_{n\to \infty}||(a-za^2)^{n+1}||^{\frac{1}{n+1}}=0.$ This implies that $a-za^2\in \mathcal{A}^{qnil}$. By using Cline's formula (see~\cite[Theorem 2.1]{L}), $y=a-a^2z\in \mathcal{A}^{qnil}$. Moreover, we see that $$\begin{array}{rll}
x^*y&=&(a^2z)^*(a-a^2z)=[z^*(a^2)^*a](1-az)\\
&=&(a^*a^2z)(1-az)=0,\\
yx&=&(a-a^2z)(a^2z)=a^3z-a^2(za^2z)=a^3z-a^2(az)=0.\end{array}$$ We claim that $x\in \mathcal{A}_{b,c}^{\#}$. In view of ~\cite[Theorem 1.1]{C2},
$z=a^{\tiny\textcircled{g}}$, and so $x=z^{\#}=a^2a^{\tiny\textcircled{g}}$.

Since $z^{\#}=z^{(b,c)}$, we have $z^{\#}\in b\mathcal{A}\bigcap \mathcal{A}c, z^{\#}zb=b$ and $czz^{\#}=c$. Then
$$\begin{array}{rll}
x^{\#}&=&z=z^{\#}zz^{\#}\in b\mathcal{A}\bigcap \mathcal{A}c,\\
x^{\#}xb&=&xx^{\#}b=z^{\#}zb=b,\\
cxx^{\#}&=&cx^{\#}x=czz^{\#}=c.
\end{array}$$ Therefore $x^{\#}=x^{(b,c)},$ as desired.\end{proof}

\begin{cor} Let $a\in \mathcal{A}$. Then the following are equivalent:\end{cor}
\begin{enumerate}
\item [(1)] $a\in \mathcal{A}$ has generalized $(b,c)$-EP inverse.
\vspace{-.5mm}
\item [(2)] There exists a unique element $x\in \mathcal{A}_{b,c}^{\#}$ such that $$x=ax^2, (a^*a^2x)^*=a^*a^2x, \lim\limits_{n\to \infty}||a^n-xa^{n+1}||^{\frac{1}{n}}=0.$$
\end{enumerate}
\begin{proof} $(1)\Rightarrow (2)$ Set $x=a_{b,c}^{\tiny\textcircled{g}}$. Then $x=a^{\tiny\textcircled{g}}$. The uniqueness is proved as the generalized group inverse $x$ is unique.

$(2)\Rightarrow (1)$ This is obvious by Theorem 2.1.\end{proof}

We denote the unique $x$ in Corollary 2.2 by $a_{b,c}^{\tiny\textcircled{g}}$, and call it the generalized group-$(b,c)$-inverse of $a$. Let $\mathcal{A}_{b,c}^{\tiny\textcircled{g}}$ denote the sets of all generalized group-$(b,c)$-invertible elements in $\mathcal{A}$.

\begin{cor} Let $a\in \mathcal{A}_{b,c}^{\tiny\textcircled{g}}$ and $d\in \mathcal{A}^{qnil}$. If $a^*d=da=0$, then
$a+d\in \mathcal{A}_{b,c}^{\tiny\textcircled{g}}$. In this case, $(a+d)_{b,c}^{\tiny\textcircled{g}}=a_{b,c}^{\tiny\textcircled{g}}.$\end{cor}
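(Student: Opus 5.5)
The plan is to use the decomposition characterization in Theorem 2.1(2). Since $a\in \mathcal{A}_{b,c}^{\tiny\textcircled{g}}$, the proof of $(3)\Rightarrow(2)$ in Theorem 2.1 gives
$$a=x+y,\quad x=a^2a^{\tiny\textcircled{g}},\quad y=a-a^2a^{\tiny\textcircled{g}},$$
with $x^*y=yx=0$, $x\in \mathcal{A}_{b,c}^{\#}$, $y\in \mathcal{A}^{qnil}$ and $x^{\#}=a^{\tiny\textcircled{g}}$. I will write
$$a+d=x+(y+d)$$
and check that this is again a decomposition of the type in Theorem 2.1(2).

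First I show that $d$ is orthogonal to $x$ in the required sense, using the explicit form of $x$. We have
$$x^*d=(a^{\tiny\textcircled{g}})^*a^*a^*d=0 \quad\text{because } a^*d=0,$$
$$dx=da^2a^{\tiny\textcircled{g}}=0 \quad\text{because } da=0.$$
Then
$$x^*(y+d)=x^*y+x^*d=0,\qquad (y+d)x=yx+dx=0.$$
The summand $x$ is unchanged, so $x\in\mathcal{A}_{b,c}^{\#}$ still holds.

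It remains to show that $y+d\in\mathcal{A}^{qnil}$, and I expect this to be the main obstacle.
\begin{enumerate}
\item From $da=0$ and $dx=0$ we get $dy=d(a-x)=0$.
\item With $dy=0$, expanding $(y+d)^n$ kills every word containing $d$ followed by $y$. Hence
$$(y+d)^n=\sum_{i+j=n}y^id^j.$$
\item Since $y,d$ are quasinilpotent, for each $\varepsilon>0$ there is $C_\varepsilon$ with $\|y^i\|\le C_\varepsilon\varepsilon^i$ and $\|d^j\|\le C_\varepsilon\varepsilon^j$. Therefore
$$\|(y+d)^n\|\le (n+1)C_\varepsilon^2\varepsilon^n,$$
so $\limsup_{n}\|(y+d)^n\|^{1/n}\le\varepsilon$ for every $\varepsilon>0$, i.e. $y+d\in\mathcal{A}^{qnil}$.
\end{enumerate}
Alternatively, this is the standard fact that $y+d$ is quasinilpotent when $y,d$ are quasinilpotent and $dy=0$.

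Applying Theorem 2.1 $(2)\Rightarrow(1)$ to the decomposition $a+d=x+(y+d)$ gives $a+d\in\mathcal{A}_{b,c}^{\tiny\textcircled{g}}$. That proof, via \cite[Theorem 1.1]{C2}, also gives $(a+d)^{\tiny\textcircled{g}}=x^{\#}$. Hence
$$(a+d)_{b,c}^{\tiny\textcircled{g}}=x^{\#}=a^{\tiny\textcircled{g}}=a_{b,c}^{\tiny\textcircled{g}},$$
where uniqueness of the inverse is guaranteed by Corollary 2.2.
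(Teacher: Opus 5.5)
Your proposal is correct and matches the paper's proof: the same decomposition $a+d=x+(y+d)$ with $x=a^2a^{\tiny\textcircled{g}}$ and $y=a-a^2a^{\tiny\textcircled{g}}$, the observation that $dy=0$ makes $y+d$ quasinilpotent, and then Theorem 2.1, which gives $(a+d)^{\tiny\textcircled{g}}=x^{\#}=a^{\tiny\textcircled{g}}$. The only differences are that you prove the quasinilpotency of $y+d$ directly by the $\varepsilon$-estimate on $\sum_{i+j=n}y^id^j$, where the paper cites \cite[Lemma 2.4]{CK}, and that you verify $x^*d=dx=0$ explicitly, which the paper treats as obvious.
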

\begin{proof} Since $a\in \mathcal{A}_{b,c}^{\tiny\textcircled{g}}$, by virtue of Theorem 2.1, there exist $x\in \mathcal{A}_{b,c}^{\#}$ and $y\in \mathcal{A}^{qnil}$ such that $a=x+y, x^*y=yx=0$. As in the proof of Theorem 2.1, $x=a^2a_{b,c}^{\#}$ and $y=a-a^2a_{b,c}^{\#}$.
Then $a+d=x+(y+d)$. As $dy=d(a-a^2a_{b,c}^{\#})=0$, it follows by ~\cite[Lemma 2.4]{CK} that $y+d\in \mathcal{A}^{qnil}$.
Obviously, $x^*(y+d)=x^*y+x^*d=0$ and $(y+d)x=yx+dx=0$.
In light of Theorem 2.1, $a+d\in \mathcal{A}_{b,c}^{\tiny\textcircled{g}}$. In this case, $(a+d)^{\tiny\textcircled{g}}=x_{b,c}^{\tiny\textcircled{\#}}=a_{b,c}^{\tiny\textcircled{g}}.$\end{proof}

\begin{exam} Let $a\in \mathcal{A}^{\tiny\textcircled{g}}$ and $d\in \mathcal{A}^{qnil}$. If $a^*d=da=0$, then
$a+d\in \mathcal{A}_{b,c}^{\tiny\textcircled{g}}$, where $b=a^{\tiny\textcircled{g}}a$ and
$c=aa^{\tiny\textcircled{g}}$. \end{exam}
\begin{proof} In view of ~\cite[Theorem 3.6]{C4}, $a^{\tiny\textcircled{g}}=a^{\tiny\textcircled{g}}aa^{\tiny\textcircled{g}}a$. We further check that
$$\begin{array}{rll}
a^{\tiny\textcircled{g}}&\in &b\mathcal{A}\bigcap \mathcal{A}c,\\
a^{\tiny\textcircled{g}}ab&=&a^{\tiny\textcircled{g}}aa^{\tiny\textcircled{g}}a=b,\\
caa^{\tiny\textcircled{g}}&=&aa^{\tiny\textcircled{g}}aa^{\tiny\textcircled{g}}=c.
\end{array}$$ Therefore $a^{\tiny\textcircled{g}}=a^{(b,c)}$, i.e.,
$a\in \mathcal{A}_{b,c}^{\tiny\textcircled{g}}$. According to Corollary 2.3, $a+d\in \mathcal{A}_{b,c}^{\tiny\textcircled{g}}$. In this case, $(a+d)_{b,c}^{\tiny\textcircled{g}}=a_{b,c}^{\tiny\textcircled{g}}.$
\end{proof}

\begin{exam} \end{exam} Let $$\begin{array}{c}
A=\left(
\begin{array}{ccc}
1&i&0\\
0&0&1\\
i&-1&0
\end{array}
\right), B=\left(
\begin{array}{ccc}
1&i&0\\
0&0&1\\
i&-1&0
\end{array}
\right),\\
C=\left(
\begin{array}{ccc}
1&0&-i\\
-i&0&-1\\
0&1&0
\end{array}
\right),D=\left(
\begin{array}{ccc}
-i&0&1\\
0&0&0\\
1&0&i
\end{array}
\right)\in {\Bbb C}^{3\times 3}.
\end{array}$$ Then $A^{\dag}=\left(
\begin{array}{ccc}
0&0&\frac{1}{2\sqrt{2}}\\
0&0&\frac{i}{2\sqrt{2}}\\
0&1&0
\end{array}
\right)$. One easily checks that
$$R(A)\subseteq R(B)\bigcap R(C^*), (I_3-AA^{\dag})B=C(I_3-A^{\dag}A)=0.$$ In view of ~\cite[Theorem 1]{H}, $AA^*\in \mathcal{A}^{\#}$ and $(AA^*)^{\#}=(A^{\dag})^*A^{\dag}.$
we verify that
$$\begin{array}{rll}
(AA^*)^{\#}&=&(AA^*)^{\#}(AA^*)(AA^*)^{\#},\\
B&=&(AA^*)^{\#}(AA^*)B,\\
C&=&C(AA^*)(AA^*)^{\#},\\
(AA^*)^{\#}&\in &B{\Bbb C}^{3\times 3}\bigcap {\Bbb C}^{3\times 3}C.
\end{array}$$ Therefore $AA^*\in \mathcal{A}_{B,C}^{\#}$. Set $$M:=AA^*+D=\left(
\begin{array}{ccc}
2-i & 0 & 1-2i \\
0 & 1 & 0 \\
1+2i & 0 & 2+i
\end{array}
\right).$$ Clearly, $A^*D=DA=0$. Since $(AA^*)^*D=D(AA^*)=0$ and $D$ is nilpotent,
by virtue of Theorem 2.1, $M\in {\Bbb C}^{3\times 3}$ has weak group-$(B,C)$-inverse. In this case, we have $$M_{B,C}^{\tiny\textcircled{W}}=(A^{\dag})^*A^{\dag}=\left(
\begin{array}{ccc}
0&\frac{1}{2\sqrt{2}}&0\\
0&\frac{i}{2\sqrt{2}}&0\\
0&0&\frac{i}{2\sqrt{2}}
\end{array}
\right).$$\\

\begin{lem} Let $a\in \mathcal{A}$. Then the following are equivalent:\end{lem}
\begin{enumerate}
\item [(1)] $a\in \mathcal{A}_{b,c}^{\#}$.
\item [(2)] $a\in \mathcal{A}^{(b,c)}$ and $\ell(b)=\ell(a), r(c)\subseteq r(a).$
\item [(3)] $a\in \mathcal{A}^{(b,c)}$ and $\ell(b)=\ell(a), r(c)=r(a).$
\end{enumerate}
\begin{proof} $(1)\Rightarrow (3)$ By virtue of ~\cite[Proposition 3.10]{WC}, we have $a\in \mathcal{A}^{(b,c)}$ and $a\mathcal{A}=b\mathcal{A}, \mathcal{A}a=\mathcal{A}c.$ Therefore we verify that $\ell(a)=\ell(b), r(a)=r(c).$

$(3)\Rightarrow (2)$ This is trivial.

$(2)\Rightarrow (1)$ Set $x=a^{(b,c)}$. Then $xab=b$ and $cax=c$. Hence, $1-xa\in \ell(b)$; and so
$1-xa\in \ell(a)$. This implies that $a=xa^2$. Moreover, $1-ax\in r(c)$, and then $a(1-ax)=0$. This implies that $a=a^2x$.
We infer that $a\in \mathcal{A}^{\#}$. Hence, $aa^{\#}=ax$ and $xa=aa^{\#}$. This implies that $x=x(ax)=x(aa^{\#})=(xa)a^{\#}=(aa^{\#})a^{\#}=
a^{\#}$. Therefore $a^{\#}=a^{(b,c)}$, as asserted.\end{proof}

We come now to investigate the generalized $(b,c)$-EP inverse by using the related annihilators.

\begin{thm} Let $a\in \mathcal{A}^{\tiny\textcircled{g}}$. Then the following are equivalent:\end{thm}
\begin{enumerate}
\item [(1)] $a\in \mathcal{A}_{b,c}^{\tiny\textcircled{g}}$.
\item [(2)] $a\in \mathcal{A}^{\tiny\textcircled{g}}\bigcap\mathcal{A}^{(b,c)}$ and $\ell(b)=\ell(a^{\tiny\textcircled{g}}), r(c)\subseteq r(a^{\tiny\textcircled{g}}).$
\item [(3)] $a\in \mathcal{A}^{\tiny\textcircled{g}}\bigcap\mathcal{A}^{(b,c)}$ and $\ell(b)=\ell(a^{\tiny\textcircled{g}}), r(c)=r(a^{\tiny\textcircled{g}}).$
\end{enumerate}
\begin{proof} $(1)\Rightarrow (3)$ In view of Theorem 2.1, there exist $x,y\in \mathcal{A}$ such that
$$a=x+y, x^*y=yx=0, x\in \mathcal{A}_{b,c}^{\#}, y\in \mathcal{A}^{qnil}.$$ Evidently, $x=a^2a^{\tiny\textcircled{g}}\in R^{\#}$.
Then $a^2a^{\tiny\textcircled{g}}\in \mathcal{A}^{(b,c)}$. By virtue of Lemma 2.6, we have $\ell(b)=\ell(x)$ and $r(c)=r(x)$.
It is easy to verify that $\ell(x)=\ell(a^2a^{\tiny\textcircled{g}})=\ell(a^{\tiny\textcircled{g}})$ and
$r(x)=r(a^2a^{\tiny\textcircled{g}})=r(aa^{\tiny\textcircled{g}})=r(a^{\tiny\textcircled{g}})$, as desired.

$(3)\Rightarrow (2)$ This is trivial.

$(2)\Rightarrow (1)$ As in the preceding discussion, we have $\ell(b)=\ell(a^{\tiny\textcircled{g}})=\ell(a^2a^{\tiny\textcircled{g}})$ and
$r(c)\subseteq r(a^{\tiny\textcircled{g}})=r(a^2a^{\tiny\textcircled{g}}).$ Since $a^2a^{\tiny\textcircled{g}}\in R^{\#}$,
it follows by ~\cite[Theorem 5.1.5, Theorem 5.1.6]{WC1} that $a^2a^{\tiny\textcircled{g}}\in R_{b,c}^{\#}$. As in the proof in ~\cite[Theorem 2.2]{C7}, we have a generalized $(b,c)$-EP decomposition
$a=x+y$, where $x=a^2a^{\tiny\textcircled{g}}$. Therefore $a\in \mathcal{A}_{b,c}^{\tiny\textcircled{g}}$.\end{proof}

An element $a$ is EP if $a^{\#}=a^{\dag}$, where $a^{\dag}$ is the Moore-Penrose inverse. An element $a$ in $\mathcal{A}$ is generalized EP (i.e., a generalized EP element) if there exist $x,y\in \mathcal{A}$ such that $a=x+y, x^*y=yx=0, x\in \mathcal{A}~\mbox{is EP},~y\in \mathcal{A}^{qnil}$ (see~\cite{C5}).
We characterize the generalized EP element by the generalized $(b,c)$-EP invertibility.

\begin{thm} Let $a\in \mathcal{A}$. Then the following are equivalent:\end{thm}
\begin{enumerate}
\item [(1)] $a\in \mathcal{A}$ is generalized EP.
\item [(2)] $a\in \mathcal{A}_{(a^d)^*,a^d}^{\tiny\textcircled{g}}$.
\item [(3)] $a\in \mathcal{A}_{a^d,(a^d)^*}^{\tiny\textcircled{g}}$.
\end{enumerate}
\begin{proof} $(1)\Rightarrow (2)$ By hypothesis, there exist $x,y\in \mathcal{A}$ such that
$$a=x+y, x^*y=yx=0, x\in \mathcal{A}~\mbox{is EP}, ~y\in \mathcal{A}^{qnil}.$$
Then $a^d=x^{\#}+\sum\limits_{n=1}^{\infty}(x^{\#})^{n+1}y^n$.
Since $x$ is EP, we see that $xx^{\#}=x^{\#}x=(xx^{\#})^*$. Hence, $x^{\#}y=x(xx^{\#})y=(xx^{\#})^*y=(x^{\#})^*(x^*y)=0$.
Hence, $a^d=x^{\#}$. Then $x^{\#}\in \mathcal{A}^d$.
Moreover, we have $x^{\#}=xx^{\#}x=(xx^{\#})^*x=(a^d)^*x^*x\in (a^d)^*\mathcal{A}$.
Thus we derive that
$$\begin{array}{rll}
a^{\tiny\textcircled{g}}&=&x^{\#}=a^{\tiny\textcircled{d}}\in (a^d)^*\mathcal{A}\bigcap \mathcal{A}a^d,\\
a^{\tiny\textcircled{g}}a(a^d)^*&=&x^{\#}(x+y)(x^{\#})^*=(x^{\#}xx^{\#})^*=(x^{\#})^*=(a^d)^*,\\
a^daa^{\tiny\textcircled{g}}&=&x^{\#}(x+y)x^{\#}=x^{\#}=a^d.
\end{array}$$ Therefore $a^{\tiny\textcircled{g}}=a_{(a^d)^*,a^d}^{\tiny\textcircled{g}}$, as required.

$(2)\Rightarrow (1)$ By hypothesis, we have $$\begin{array}{rll}
a^{\tiny\textcircled{g}}&=&a^{\tiny\textcircled{g}}(aa^{\tiny\textcircled{g}}a)a^{\tiny\textcircled{g}},\\
(a^d)^*&=&a^{\tiny\textcircled{g}}(aa^{\tiny\textcircled{g}}a)(a^d)^*,\\
a^d&=&a^d(aa^{\tiny\textcircled{g}}a)a^{\tiny\textcircled{g}},\\
\end{array}$$ For any $n\in {\Bbb N}$, we derive $$\begin{array}{rll}
a^{\tiny\textcircled{g}}-a^d&=&a^n(a^{\tiny\textcircled{g}})^{n+1}-a^daa^{\tiny\textcircled{g}}\\
&=&a^n(a^{\tiny\textcircled{g}})^{n+1}-a^da^{n+1}(a^{\tiny\textcircled{g}})^{n+1}\\
&=&[a^n-a^da^{n+1}](a^{\tiny\textcircled{g}})^{n+1}.
\end{array}$$ This implies that $a^{\tiny\textcircled{g}}=a^d$. Hence $a^da(a^d)^*=(a^d)^*$, and then $a^d=a^d(aa^d)^*$.
Hence $aa^d=aa^d(aa^d)^*$. Thus $a^{\tiny\textcircled{d}}=a^{\tiny\textcircled{g}}=a^d$. According to \cite[Theorem 2.8]{C5},
$a\in \mathcal{A}$ is generalized EP, as desired.

$(1)\Leftrightarrow (3)$ This is proved analogously to the argument above.\end{proof}

\begin{cor} Let $a\in \mathcal{A}$. Then the following are equivalent:\end{cor}
\begin{enumerate}
\item [(1)] $a\in \mathcal{A}$ is *-DMP.
\item [(2)] $a\in \mathcal{A}_{(a^D)^*,a^D}^{\tiny\textcircled{W}}$.
\item [(3)] $a\in \mathcal{A}_{a^D,(a^D)^*}^{\tiny\textcircled{W}}$.
\end{enumerate}
\begin{proof} Since $a\in \mathcal{A}$ is *-DMP if and only if $a\in \mathcal{A}$ is both generalized EP and Drazin invertible.
Therefore we complete the proof by Theorem 2.8.\end{proof}

\section{representations and polar-like properties}

In this section, we characterize the generalized $(b,c)$-EP inverse in terms of its generalized Drazin invertibility and establish its polar-like properties.
We now derive:

\begin{thm} Let $a\in \mathcal{A}$. Then $a\in \mathcal{A}_{b,c}^{\tiny\textcircled{g}}$ if and only if\end{thm}
\begin{enumerate}
\item [(1)] $a\in \mathcal{A}^d$;
\vspace{-.5mm}
\item [(2)] There exists some $x\in \mathcal{A}^{(b,c)}$ such that $(a^d)^*a^dx=(a^d)^*a.$
\end{enumerate}
In this case, $a_{b,c}^{\tiny\textcircled{g}}=(a^d)^3x.$
\begin{proof} $\Longrightarrow $ By virtue of Theorem 2.1, $a\in \mathcal{A}^{d}$ and there exists $z\in \mathcal{A}_{b,c}^{\#}$ such that $$z=az^2, (a^d)^*a^2z=(a^d)^*a, \lim\limits_{n\to \infty}||a^n-za^{n+1}||^{\frac{1}{n}}=0.$$ Choose $x=a^3z$. Then

Claim 1. $x^{\#}\in b\mathcal{A}x^{\#}\bigcap x^{\#}\mathcal{A}c$.

Claim 2. $x^{\#}xb=b$.

Claim 3. $cxx^{\#}=c$.

Moreover, we verify that
$$\begin{array}{rl}
&||(a^d)^*a-(a^d)^*a^dx||\\
=&||(a^d)^*a-(a^d)^*a^da^3z||\\
=&||(a^d)^*a-(a^d)^*a^da^{k+1}z^{k-1}||\\
\leq&||(a^d)^*a-(a^d)^*a^{k}z^{k-1}||+||(a^d)^*(a^{k}-a^da^{k+1})z^{k-1}||\\
\leq&||(a^d)^*a-(a^d)^*a^2z||+||(a^d)^*||||a^{k}-a^da^{k+1}||||z^{k-1}||\\
=&||(a^d)^*||||a^{k}-a^da^{k+1}||||z^{k-1}||.
\end{array}$$ Since $\lim\limits_{k\to \infty}||a^k-a^da^{k+1}||^{\frac{1}{k}}=0$, we derive that
$\lim\limits_{k\to \infty}||(a^d)^*a-(a^d)^*a^dx||^{\frac{1}{k}}=0.$ Hence $(a^d)^*a^dx=(a^d)^*a,$ as required.

$\Longleftarrow $ By hypothesis, $(a^d)^*a^dx=(a^d)^*a$ for some $x\in \mathcal{A}$.
Then $(aa^d)^*aa^d=a^*[(a^d)^*a]a^d=a^*[(a^d)^*a^dx]a^d=(aa^d)^*aa^d(a^dxa^d)$. Since the involution $*$ is proper, we get
$aa^d=a^dxa^d$. Choose $y=(a^d)^3x$. Then we verify that
$$\begin{array}{rll}
ay^2&=&a(a^d)^3x(a^d)^3x=(a^d)^2x(a^d)^3x=a^d(a^dxa^d)(a^d)^2x\\
&=&a^d(aa^d)(a^d)^2x=(a^d)^3x=y,\\
(a^d)^*a^2y&=&(a^d)^*a^2(a^d)^3x=(a^d)^*a^dx=(a^d)^*a.\\
\end{array}$$
Moreover, we see that
$$\begin{array}{rl}
&||a^k-ya^{k+1}||\\
=&||(a^k-a^da^{k+1})+(a^d)^2aa^da^{k+2}-(a^d)^3xa^{k+1}||\\
\leq &||(a^k-a^da^{k+1})||+||(a^d)^2aa^da^{k+2}-(a^d)^3xa^{k+1}||\\
\leq&||(a^k-a^da^{k+1})||+||(a^d)^2a^dxa^da^{k+2}-(a^d)^3xa^{k+1}||\\
\leq&||a^k-a^da^{k+1}||+||(a^d)^3x||||a^{k}-a^da^{k+1}||||a||\\
=&||a^k-a^da^{k+1}||(1+||(a^d)^3x||||a||).
\end{array}$$ Since $\lim\limits_{k\to \infty}||a^k-a^da^{k+1}||^{\frac{1}{k}}=0$, we deduce that
$\lim\limits_{k\to \infty}||a^k-ya^{k+1}||^{\frac{1}{k}}=0.$ In view of Theorem 3.1, $a\in \mathcal{A}_{b,c}^{\tiny\textcircled{g}}$. In this case,
$a_{b,c}^{\tiny\textcircled{g}}=(a^d)^3x.$ This completes the proof.\end{proof}

\begin{cor} Let $a\in \mathcal{A}$. Then $a\in \mathcal{A}_{b,c}^{\tiny\textcircled{g}}$ if and only if\end{cor}
\begin{enumerate}
\item [(1)] $a\in \mathcal{A}^d$;
\vspace{-.5mm}
\item [(2)] There exists an idempotent $q\in \mathcal{A}_{b,c}^{\#}$ such that $$a^d\mathcal{A}=q\mathcal{A}~\mbox{and} ~(a^d)^*aq=(a^d)^*a.$$
\end{enumerate}
\begin{proof} $\Longrightarrow $ By using ~\cite[Theorem 3.5]{C7}, $a\in \mathcal{A}^{d}$ and there exists an idempotent $q\in \mathcal{A}$ such that $$a^d\mathcal{A}=q\mathcal{A}~\mbox{and} ~a^*aq=q^*a^*a.$$ Explicit, $q=aa_{b,c}^{\tiny\textcircled{g}}$. We directly check that
$$\begin{array}{rll}
qaa^d&=&aa_{b,c}^{\tiny\textcircled{g}}aa^d=aa^d,\\
aa^dq&=&aa^daa_{b,c}^{\tiny\textcircled{g}}=q.
\end{array}$$ Thus, $q\in \mathcal{A}_{b,c}^{\#}$. Furthermore, we obtain
$$\begin{array}{rll}
(a^d)^*aq&=&[a(a^d)^2]^*aq=[(a^d)^2]^*(a^*aq)=[(a^d)^2]^*(q^*a^*a)\\
&=&[aq(a^d)^2]^*a=[a(qaa^d)(a^d)^2]^*a=[a(aa^d)(a^d)^2]^*a=(a^d)^*a.
\end{array}$$ We conclude that $a^d\mathcal{A}=q\mathcal{A}$ and $(a^d)^*aq=(a^d)^*a,$ as required.

$\Longleftarrow $ By hypothesis, there exists an idempotent $q\in \mathcal{A}_{b,c}^{\#}$ such that $$a^d\mathcal{A}=q\mathcal{A}~\mbox{and} ~(a^d)^*aq=(a^d)^*a.$$  Write $q=a^dz$ with $z\in \mathcal{A}$. Choose $x=az$. Then $(a^d)^*a^dx=(a^d)^*a^daz=(a^d)^*aq=(a^d)^*a$,
the result follows by Theorem 3.1.\end{proof}

We now proceed to elucidate the generalized $(b,c)$-EP inverse through its characterization by the polar-like property.

\begin{thm} Let $a\in \mathcal{A}$. Then the following are equivalent:\end{thm}
\begin{enumerate}
\item [(1)]{\it $a\in \mathcal{A}_{b,c}^{\tiny\textcircled{g}}$.}
\item [(2)]{\it $a\in \mathcal{A}^{d}$ and there exists an idempotent $p\in \mathcal{A}$ such that $$a+p\in \mathcal{A}^{-1}, 1-p\in \mathcal{A}_{b,c}^{\#}, (a^*ap)^*=a^*ap~\mbox{and} ~pa=pap\in \mathcal{A}^{qnil}.$$}
\item [(3)]{\it $a\in \mathcal{A}^{d}$ and there exists an idempotent $p\in \mathcal{A}$ such that $$a+p\in \mathcal{A}^{-1}, 1-p\in \mathcal{A}_{b,c}^{\#}, (a^d)^*ap=0~\mbox{and} ~pa=pap\in \mathcal{A}^{qnil}.$$}
\end{enumerate}
\begin{proof} $(1)\Rightarrow (2)$ In view of Theorem 2.1, $a\in \mathcal{A}^{d}$. Since $a\in R_{b,c}^{\tiny\textcircled{g}}$, there exist $z,y\in \mathcal{A}$ such that $$a=z+y, z^*y=yz=0, z\in \mathcal{A}_{b,c}^{\#}, y\in \mathcal{A}^{qnil}.$$ Set $x=z^{\#}$. Then we check that $$\begin{array}{rll}
ax&=&(z+y)z^{\#}=zz^{\#},\\
ax^2&=&(ax)x=z(z^{\#})^2=z^{\#}=x,\\
(a^*a^2x)^*&=&(z^*z)^*=z^*z=a^*a^2x.
\end{array}$$ Let $p=1-zz^{\#}$. Then $p=1-ax=p^2\in \mathcal{A}$.

Since $z\in \mathcal{A}_{b,c}^{\#}$, we easily check that $(1-p)b=b$ and $c(1-p)=c$ and $1-p\in b\mathcal{A}\bigcap \mathcal{A}c$.
Then $1-p=(1-p)^{(b,c)}$, and so $1-p\in \mathcal{A}_{b,c}^{\#}$. Furthermore, $a-azz^{\#}=a(1-zz^{\#})=(z+y)(1-zz^{\#})=y\in \mathcal{A}^{qnil}$, and so
$pa=(1-zz^{\#})a=a-zz^{\#}a\in \mathcal{A}^{qnil}$ by Cline's formula (see~~\cite[Theorem 2.1]{L}).
Therefore we have $$(a^*ap)^*=(a^*a-a^*a^2x)^*=a^*a-a^*a^2x=a^*ap.$$
Since $pa(1-p)=(1-zz^{\#})(z+y)zz^{\#}=0$, we get $pa=pap$.
Obviously, $z+1-zz^{\#}=(z^{\#}+1-zz^{\#})^{-1}\in \mathcal{A}^{-1}$.
Since $y(z^{\#}+1-zz^{\#})=y\in \mathcal{A}^{qnil}$, it follows by Cline's formula that $(z^{\#}+1-zz^{\#})y\in \mathcal{A}^{qnil}$.
Hence $1+(z^{\#}+1-zz^{\#})y\in \mathcal{A}^{-1}$. This implies that $$\begin{array}{rll}
a+p&=&z+y+1-zz^{\#}\\
&=&(z+1-zz^{\#})[1+(z^{\#}+1-zz^{\#})^{-1}y]\in\mathcal{A}^{-1},
\end{array}$$ as desired.

$(2)\Rightarrow (1)$ By hypothesis, there exists an idempotent $p\in \mathcal{A}$ such that $$u:=a+p\in \mathcal{A}^{-1}, 1-p\in \mathcal{A}_{b,c}^{\#}, (a^*ap)^*=a^*ap, pa=pap\in \mathcal{A}^{qnil}.$$ Set $x=a(1-p)$ and $y=ap$. Then $a=x+y$. Since $pa=pap\in \mathcal{A}^{qnil}$, we have $y=ap\in \mathcal{A}^{qnil}$ by Cline's formula.
We also see that $pa(1-p)=0$, and then $yx=apa(1-p)=0$. Moreover, we have $x^*y=[a(1-p)]^*ap=(1-p)^*(a^*ap)=(1-p)^*(a^*ap)^*=(1-p^*)p^*a^*a=0$.
It will suffice to prove $x\in \mathcal{A}^{\#}$.

Clearly, $x=a(1-p)=u(1-p)$. Let $z=(1-p)u^{-1}$. Then we check that $zx=1-p$; hence, $zx^2=(1-p)x=(1-p)a(1-p)=a(1-p)=x$.
Since $a\in \mathcal{A}^d, ap\in \mathcal{A}^d$ and $pa(1-p)=0$, it follows by ~\cite[Lemma 2.2]{ZM} that $x=a(1-p)\in \mathcal{A}^d$.
Therefore $x=zx^2=z^nx^{n+1}=z^n(x^n-x^dx^{n+1})x+z^nx^dx^{n+2}$ and $x^2x^d=(zx^2)xx^d=z^nx^{n+2}x^d$.
Then $$\begin{array}{rll}
||x-x^2x^d||^{\frac{1}{n}}&=&||z^n(x^n-x^dx^{n+1})x||^{\frac{1}{n}}\\
&\leq &||z^n||^{\frac{1}{n}}||x^n-x^dx^{n+1}||^{\frac{1}{n}}||x||^{\frac{1}{n}}.
\end{array}$$ Accordingly, $$\lim\limits_{n\to \infty}||x-x^2x^d||^{\frac{1}{n}}=0.$$
Therefore we have $x=x^2x^d$, and so $x\in \mathcal{A}^{\#}$. Hence, $zxx^d=(zx^2)(x^d)^2=x(x^d)^2=x^d=x^{\#}$.
This implies that $xx^{\#}=z(xx^dx)=zx=1-p$. Since $x=a(1-p)=(1-p)a(1-p)$, we see that $x^{\#}=x^d\in (1-p)\mathcal{A}(1-p)$. By hypothesis,
$1-p=(1-p)^{(b,c)}$. Hence, $1-p\in b\mathcal{A}\bigcap \mathcal{A}c, (1-p)b=b$ and $c(1-p)=c$.
This implies that $x^{\#}\in b\mathcal{A}\bigcap \mathcal{A}c$. Moreover, we verify that
$$xx^{\#}=1-p,$$ and then $x^{\#}xb=b$ and $cxx^{\#}=c$. Thus, $x\in \mathcal{A}_{b,c}^{\#}$.
Therefore $a=x+y$ is a generalized $(b,c)$-EP decomposition of $a$. In light of Theorem 2.1, $a\in \mathcal{A}_{b,c}^{\tiny\textcircled{g}}$.

$(1)\Rightarrow (3)$ Clearly, $a\in \mathcal{A}^{d}$. By hypothesis, $a$ has the generalized $(b,c)$-EP decomposition $a=z+y$ with $z\in \mathcal{A}^{\#}_{b,c}$.
Let $x=z^{\#}$ and $p=1-ax$. Then $1-p=ax=(z+y)x=zz^{\#}$; hence, $1-p\in \mathcal{A}_{b,c}^{\#}$. As in the preceding discussion, we prove that
$$a+p\in \mathcal{A}^{-1}~\mbox{and} ~pa=pap\in \mathcal{A}^{qnil}.$$
Similarly to Theorem 2.2, we show that $a\in \mathcal{A}^d$ and $(a^d)^*a^2x=(a^d)^*a.$ Therefore $(a^d)^*ap=(a^d)^*a-(a^d)^*a^2x=0$, as desired.

$(3)\Rightarrow (1)$ By hypothesis, we have an idempotent $p\in \mathcal{A}$ such that $$a+p\in \mathcal{A}^{-1},
1-p\in \mathcal{A}_{b,c}^{\#}, (a^d)^*ap=0~\mbox{and} ~pa=pap\in \mathcal{A}^{qnil}.$$
Set $x=a(1-p)$ and $y=ap$. Then $a=x+y$. Analogously to the preceding discussion, we have $yx=0, x\in \mathcal{A}^{\#}$ and $y\in \mathcal{A}^{qnil}$.
By using Cline's formula, $ap\in \mathcal{A}^{qnil}$. Since $pa(1-p)=0$, it follows by~\cite[Lemma 2.2]{ZM} that $a(1-p)\in \mathcal{A}^d$ and $[a(1-p)]^d=a^d(1-p)$.
Thus we verify that $$\begin{array}{rll}
x^*y&=&(x^dx^2)^*ap\\
&=&(x^2)^*(1-p)^*(a^d)^*=0.
\end{array}$$
As in the preceding discussion, we have $xx^{\#}=1-p$, and then we verify that $x\in \mathcal{A}^{\#}_{b,c}$. According to Theorem 2.1, $a\in \mathcal{A}_{b,c}^{\tiny\textcircled{g}}$.\end{proof}

We next investigate the weak $(b,c)$-EP inverse relative to a pair of elements in a Banach *-algebra. The Drazin inverse of an element $a$ is the unique element $a^D$ satisfying $a^{k+1}a^D = a^k$, $a^D aa^D = a^D$, and $aa^D = a^D a$. The least such non-negative integer $k$ is called the index of $a$, denoted by $\operatorname{ind}(a)$.

\begin{lem} Let $a\in \mathcal{A}$. Then $a\in \mathcal{A}_{b,c}^{\tiny\textcircled{W}}$ if and only if $a\in \mathcal{A}_{b,c}^{\tiny\textcircled{g}}\bigcap \mathcal{A}^{D}$.\end{lem}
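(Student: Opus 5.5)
The proof reduces the lemma to one fact: $a\in \mathcal{A}^{\tiny\textcircled{W}}$ if and only if $a\in \mathcal{A}^{\tiny\textcircled{g}}\bigcap \mathcal{A}^{D}$, and in that case $a^{\tiny\textcircled{W}}=a^{\tiny\textcircled{g}}$. Once this is established, the $(b,c)$ condition transfers verbatim. Indeed, $a^{\tiny\textcircled{W}}=a^{(b,c)}$ holds exactly when $a^{\tiny\textcircled{g}}=a^{(b,c)}$, since the two elements coincide. So the plan is to prove the two inclusions with the identification of the inverses, and then read off the definitions of $\mathcal{A}_{b,c}^{\tiny\textcircled{W}}$ and $\mathcal{A}_{b,c}^{\tiny\textcircled{g}}$.

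\textbf{Forward direction.} Suppose $x=a^{\tiny\textcircled{W}}$, so that $ax^2=x$, $(a^*a^2x)^*=a^*a^2x$ and $xa^{n+1}=a^n$ for some $n$. Then $a^n-xa^{n+1}=0$, and the same holds for every larger power after multiplying on the right by $a$. Hence $\|a^k-xa^{k+1}\|^{1/k}=0$ for $k\geq n$, so $x$ satisfies the defining equations of the generalized group inverse, and by uniqueness $x=a^{\tiny\textcircled{g}}$. For Drazin invertibility, mimic the argument of Theorem 2.1: set $a_1=a^2x$ and $a_2=a-a^2x$. The computations there show $a_1\in\mathcal{A}^{\#}$ with $a_1^{\#}=x$ and $a_2a_1=0$. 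Exactly as in Theorem 2.1 $(3)\Rightarrow(2)$, we have $(a-xa^2)a^{n}=a^{n+1}-xa^{n+2}=0$, which gives $(a-xa^2)^{n+1}=0$. Then Cline's formula in its nilpotent form, $(uv)^{m}=0\Rightarrow (vu)^{m+1}=0$, makes $a_2$ nilpotent. Finally, the sum of a group invertible element and a nilpotent element with $a_2a_1=0$ is Drazin invertible, by the same series formula \cite[Corollary 3.4]{CK} used in Theorem 2.1, which now terminates. Hence $a\in\mathcal{A}^D$.

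\textbf{Converse.} Let $a\in \mathcal{A}^{\tiny\textcircled{g}}\bigcap\mathcal{A}^D$ with $\operatorname{ind}(a)=k$, and put $z=a^{\tiny\textcircled{g}}$. It suffices to show $za^{k+1}=a^k$. By the decomposition of Theorem 2.1 we may write $a=x+y$ with $x=a^2z$, $y=a-a^2z$ quasinilpotent, $yx=0$ and $z=x^{\#}$. This gives $a^k-za^{k+1}=(a^k-za^{k+1})$ expressed via $(a-za^2)a^{k}$, as in the paper's computation. Since $a^D$ exists, the spectral idempotent $1-aa^D$ is such that $a(1-aa^D)$ is nilpotent of index $k$. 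Moreover, the g-Drazin inverse equals $a^D$, so $a-a^2a^d$ is nilpotent. Using $a_2=y$, I will show that $y$ is actually nilpotent. Here $(a-za^2)a^k=(a-za^2)a^da^{k+1}+(a-za^2)(a^k-a^da^{k+1})$, and the last factor vanishes because $a^k=a^Da^{k+1}$. The first term equals $(a-za^2)a^da^{k+1}$. Since $(a-za^2)z=0$ and $a^d$ is a polynomial-limit expression in $x^{\#}=z$ and $y$ (via the series of \cite[Corollary 3.4]{CK}), we get $(a-za^2)a^d=0$. Therefore $(a-za^2)a^k=0$, i.e.\ $a^{k+1}=za^{k+2}$. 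Cancelling one power using the relation $za^{k+1}=z a a^D a^{k+1}$ then yields $za^{k+1}=a^k$ up to raising $n$ to $k+1$, which the definition of the weak group inverse allows (it only asks for some $n$).

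\textbf{Main obstacle.} I expect the main obstacle to be the converse identity $(a-za^2)a^d=0$. The cleanest route I would try first is the identity $a^d=z+\sum_{n\ge1} z^{n+1}y^n$ together with $(a-za^2)z=0$. This reduces the claim to showing $(a-za^2)z^{n+1}y^n=0$, which is immediate since $(a-za^2)z^{n+1}=((a-za^2)z)z^n=0$.
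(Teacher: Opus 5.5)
Your argument is correct, but it is organized differently from the paper's, mainly in the converse.

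\textbf{How the paper argues.} The paper stays inside its decomposition framework. In the forward direction it produces a $(b,c)$-EP decomposition $a=x+y$ with $y$ nilpotent and reads off both memberships. In the converse it takes the decomposition $a=a^2z+(a-a^2z)$, with $z$ the generalized group inverse. It proves the elementary identity $(a-aa^Da)z=0$ from $az=a^mz^m$ and $aa^Da^m=a^m$, and uses it to show that the quasinilpotent summand is nilpotent. It then appeals to a nilpotent analogue of Theorem 2.1 that is never written out.

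\textbf{How your route differs.} You remove $(b,c)$ at the outset: you show that the weak group inverse exists exactly when $a$ is generalized group invertible and Drazin invertible, and that it then equals $z$. In the converse you check the defining equation $za^{k+2}=a^{k+1}$ directly. You get it from $(a-za^2)a^D=0$, which follows from $(a-za^2)z=0$ and the series $a^d=\sum_{n\ge 0}z^{n+1}y^n$.

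\textbf{What this buys.} The argument is self-contained: it needs no unproved nilpotent version of Theorem 2.1, and the $(b,c)$ bookkeeping becomes trivial. It also sidesteps a slip in the paper. Using your identity $za^2a^D=aa^D$, the paper's displayed expression $(a-aa^Da)+az(aa^Da-a)=(1-az)(a-aa^Da)$ equals $a-aza$. That is not $y=a-a^2z$ in general, so the paper needs one more application of Cline's formula at that point.

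\textbf{A simplification.} The cost on your side is the infinite series, and you can drop it. For $N\ge k$ one has $a^k-za^{k+1}=(a^N-za^{N+1})(a^D)^{N-k}$. Hence $\|a^k-za^{k+1}\|^{1/N}\to 0$, which gives $za^{k+1}=a^k$ at once. This also makes your stray remarks about proving $y$ nilpotent and about cancelling a power unnecessary.

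\textbf{Two points to tighten in the forward direction.}
First, Cline's formula does not connect $a-xa^2=(1-xa)a$ with $a-a^2x=a(1-ax)$ in one step. It goes through the intermediate element $a-axa=a(1-xa)=(1-ax)a$, so it must be applied twice. Alternatively, argue directly: $a_2a_1=0$ gives $a_2^{j+1}=a_2a^j$, hence $a_2^{n+2}=a_2a^{n+1}=a^{n+2}-a^2xa^{n+1}=0$.

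Second, a terminating series for $a^d$ does not by itself show that $a-a^2a^d$ is nilpotent. To get this, put $a^\pi=1-aa^d$. From the series one checks $aa^da_1=a_1$, so $a^\pi a_1=0$. Since $a_2a_1=0$, the power $a^m$ is a sum of words $a_1^ia_2^{m-i}$. Therefore $a^\pi a^m=a^\pi a_2^m=0$ once $a_2^m=0$.
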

\begin{proof} $(1)\Rightarrow (2)$ Similarly to Theorem 2.1, we can find $x,y\in \mathcal{A}$ such that $$a=x+y, x^*y=yx=0, x\in \mathcal{A}_{b,c}^{\#},~y\in \mathcal{A}^{nil}.$$ By virtue of Theorem 2.1, $a\in \mathcal{A}_{b,c}^{\tiny\textcircled{g}}$.
Since $x,y\in \mathcal{A}^D$ and $yx=0$, we have $a\in \mathcal{A}^D$, as required.

$(2)\Rightarrow (1)$ By virtue of Theorem 2.1, there exist $z,y\in \mathcal{A}$ such that
$$a=z+y, z^*y=yz=0, z\in \mathcal{A}_{b,c}^{\#},~y\in \mathcal{A}^{qnil}.$$
Evidently, $z=a^2a^{\tiny\textcircled{g}}$ and $y=a-a^2a^{\tiny\textcircled{g}}$. Since $a\in \mathcal{A}^D$, we see that $a^m=a^Da^{m+1}$ for some $m\in {\Bbb N}$. This implies that $a-aa^Da$ is nilpotent. We easily check that $$\begin{array}{rll}
(a-aa^Da)a^{\tiny\textcircled{g}}&=&aa^{\tiny\textcircled{g}}-aa^Daa^{\tiny\textcircled{g}}\\
&=&aa^{\tiny\textcircled{g}}-aa^Da^m(a^{\tiny\textcircled{g}})^m\\
&=&aa^{\tiny\textcircled{g}}-a^m(a^{\tiny\textcircled{g}})^m=0.
\end{array}$$ Hence $[aa^{\tiny\textcircled{g}}(aa^Da-a)]^2=0$.
Since $(a-aa^Da)[aa^{\tiny\textcircled{g}}(aa^Da-a)]=0$, we see that
$$y=(a-aa^Da)+aa^{\tiny\textcircled{g}}(aa^Da-a)\in \mathcal{A}^{qnil}.$$ As in the proof of Theorem 2.1,
we prove that $a\in \mathcal{A}_{b,c}^{\tiny\textcircled{W}}$, as asserted.\end{proof}

\begin{thm} Let $a\in \mathcal{A}$. Then the following are equivalent:\end{thm}
\begin{enumerate}
\item [(1)] $a\in \mathcal{A}_{b,c}^{\tiny\textcircled{W}}$.
\item [(2)] There exist $x\in \mathcal{A}$ and $m\in {\Bbb N}$ such that $$x=xax, x\mathcal{A}=b\mathcal{A}=a^m\mathcal{A}=a^{m+1}\mathcal{A}, a^*a^m\mathcal{A}\subseteq x^*\mathcal{A}=c^*\mathcal{A}.$$
\item [(3)] There exist $x,y\in \mathcal{A}$ such that $$a=x+y, x^*y=yx=0, x\in
\mathcal{A}_{b,c}^{\#}, y\in \mathcal{A}^{nil}.$$
\item [(4)] $a\in \mathcal{A}^{\tiny\textcircled{W}}$ and
$a^{\tiny\textcircled{W}}=(aa^{\tiny\textcircled{W}}a)^{(b,c)}$.
\vspace{-.5mm}
\item [(5)] There exists an idempotent $p\in \mathcal{A}$ such that
$$a+p\in \mathcal{A}^{-1}, 1-p\in \mathcal{A}_{b,c}^{\#}, (a^d)^*ap=0~\mbox{and} ~ap=pap\in \mathcal{A}^{nil}.$$.
\end{enumerate}
\begin{proof} $(1)\Rightarrow (2)$ Since $a\in \mathcal{A}_{b,c}^{\tiny\textcircled{W}}$, we see that $a\in \mathcal{A}_{b,c}^{\tiny\textcircled{W}}$.
Set $x=a^{\tiny\textcircled{W}}$. It follows by ~\cite[Theorem 3.11]{Z1} that there exists $m\in {\Bbb N}$ such that $$x=xax, x\mathcal{A}=a^m\mathcal{A}=a^{m+1}\mathcal{A}, a^*a^m\mathcal{A}\subseteq x^*\mathcal{A}.$$ As $x=a^{(b,c)}$, we see that
$x\mathcal{A}=b\mathcal{A}$ and $\mathcal{A}x=\mathcal{A}c$. Therefore $x^*\mathcal{A}=c^*\mathcal{A}$, as required.

$(2)\Rightarrow (1)$ By virtue of ~\cite[Theorem 3.11]{Z1}, $x=a^{\tiny\textcircled{W}}$. Since
$x=xax, x\mathcal{A}=b\mathcal{A}$ and $\mathcal{A}x=\mathcal{A}c$. Thus $x=a^{(b,c)}$. Therefore $a\in \mathcal{A}_{b,c}^{\tiny\textcircled{W}}$.

$(1)\Leftrightarrow (3)\Leftrightarrow (4)\Leftrightarrow (5)$ These are proved by Theorem 2.1, Theorem 3.3 and Lemma 3.4.\end{proof}

\begin{cor} Let $A,B,C\in {\Bbb C}^{n\times n}$, with conjugate transpose $*$ as the involution. Then the following are equivalent:\end{cor}
\begin{enumerate}
\item [(1)] $A$ has weak $(B,C)$-EP inverse.
\item [(2)] There exist $X,Y\in {\Bbb C}^{n\times n}$ such that $A=X+Y, X^*Y=YX=0, X~\mbox{has} ~(B,C)-EP ~\mbox{inverse}, Y~\mbox{is nilpotent}.$
\item [(3)] There exists an idempotent $E\in {\Bbb C}^{n\times n}$ such that $A+E$ is invertible, $(A^D)^*AE=0, (I_n-E)B=B, C(I_n-E)=C, AE=EAE$ is nilpotent.
\end{enumerate}
\begin{proof} As is well known, every complex has Drazin inverse. This completes the proof by Theorem 3.5.\end{proof}

We illustrate Corollary 3.6 be the following numerical example.

\begin{exam}\end{exam} Let $A=\left(
\begin{array}{cccc}
1&1&0&0\\
0&1&0&0\\
0&0&0&1\\
0&0&0&0
\end{array}
\right)\in {\Bbb C}^{4\times 4}$. Then $A^{\tiny\textcircled{W}}=\left(
\begin{array}{cccc}
1&-1&0&0\\
0&1&0&0\\
0&0&0&0\\
0&0&0&0
\end{array}
\right).$ Set $B=\left(
\begin{array}{cccc}
1&1&1&-1\\
0&1&-1&1\\
0&0&0&0\\
0&0&0&0
\end{array}
\right), C=\left(
\begin{array}{cccc}
1&1&0&0\\
1&-1&0&0\\
1&1&0&0\\
1&1&0&0
\end{array}
\right)$. Choose $E=I-AA^{\tiny\textcircled{W}}$. Then $E^2=E=\left(
\begin{array}{cccc}
0&0&0&0\\
0&0&0&0\\
0&0&1&0\\
0&0&0&1
\end{array}
\right).$ We verify that $A+E~\mbox{is invertible}, ~(A^*A^2E)^*=A^*A^2E, (I_n-E)B=B, C(I_n-E)=C, AE=EAE$ is nilpotent.
By virtue of Corollary 5.3, $A$ has $(B,C)$-EP inverse, i.e., $A^{\tiny\textcircled{W}}=A^{(B,C)}$.\\

\section{relations with associated generalized inverses}

This section is devoted to investigating the generalized $(b,c)$-EP properties through related generalized inverses. We proceed to derive

\begin{thm} Let $a\in \mathcal{A}$. Then $a\in \mathcal{A}_{b,c}^{\tiny\textcircled{g}}$ if and only if $a\in \mathcal{A}^{\tiny\textcircled{g}}$ and $cab\in \mathcal{A}^{-}$ such that $$a^{\tiny\textcircled{g}}=b(cab)^{-}c, b(cab)^{-}(cab)=b, (cab)(cab)^{-}c=c.$$\end{thm}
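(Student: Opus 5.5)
The statement says that being generalized $(b,c)$-EP invertible amounts to $a^{\tiny\textcircled{g}}$ coinciding with a $(b,c)$-inverse of $a$ given by the familiar formula $b(cab)^{-}c$. So the plan is to work purely algebraically from the definition $a_{b,c}^{\tiny\textcircled{g}}=a^{\tiny\textcircled{g}}=a^{(b,c)}$. We use the characterization recalled in the introduction: $x=a^{(b,c)}$ if and only if $xab=b$, $cax=c$ and $x\in b\mathcal{A}x\cap x\mathcal{A}c$, equivalently $x=xax$, $x\mathcal{A}=b\mathcal{A}$, $\mathcal{A}x=\mathcal{A}c$.

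\emph{Necessity.} Suppose $a\in\mathcal{A}_{b,c}^{\tiny\textcircled{g}}$ and put $x=a^{\tiny\textcircled{g}}$. Then $a\in\mathcal{A}^{\tiny\textcircled{g}}$, and $x=a^{(b,c)}$ gives $x\in b\mathcal{A}\cap\mathcal{A}c$. Write $x=bs=tc$ with $s,t\in\mathcal{A}$.

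\begin{enumerate}
\item Set $y:=sat$ as the candidate inner inverse of $cab$. Then
$$b\,y\,c=(bs)a(tc)=xax=x,$$
which is the representation $a^{\tiny\textcircled{g}}=b(cab)^{-}c$.
\item From $xab=b$ we get $b\,y\,(cab)=(bsa)(tc)ab=xa\,xab=xab=b$.
\item From $cax=c$ we get $(cab)\,y\,c=ca(bs)a(tc)=caxax=cax=c$.
\item Combining the last two, $(cab)\,y\,(cab)=(cab\,y\,c)ab=cab$, so $y\in(cab)^{-}$.
\end{enumerate}

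\emph{Sufficiency.} Suppose $a\in\mathcal{A}^{\tiny\textcircled{g}}$ and $g:=(cab)^{-}$ satisfies $a^{\tiny\textcircled{g}}=bgc$, $bg\,cab=b$ and $cab\,gc=c$. Put $x=bgc$.

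\begin{enumerate}
\item $xab=b(g\,cab)=b$ and $cax=(cab\,g)c=c$.
\item $x=bgc=(bg\,cab)gc=b(gcabg)c$. Since $b\,g\,cab=b$, we have $b\,gcabg\,c=b\,g\,c$.
\item For membership in $b\mathcal{A}x$, compute $b(gca)x=(bgc)a(bgc)=xax$. Further, $xax=bg(cab)gc$, and $bg\,cab=b$ gives $bg\,cab\,gc=bgc=x$. Hence $xax=x$ and $x=b(gca)x\in b\mathcal{A}x$.
\item Symmetrically, $x=x(abg)c\in x\mathcal{A}c$.
\end{enumerate}

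Hence $x=a^{(b,c)}$, and since $x=a^{\tiny\textcircled{g}}$ this gives $a^{\tiny\textcircled{g}}=a^{(b,c)}$, i.e.\ $a\in\mathcal{A}_{b,c}^{\tiny\textcircled{g}}$.

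Every step is a direct identity, so the only real choice is the candidate $(cab)^{-}=sat$ in the necessity direction. Picking factorizations $x=bs=tc$ from $x\in b\mathcal{A}\cap\mathcal{A}c$ is the key trick. No limit arguments or Theorem 2.1 are needed; the generalized group invertibility enters only through the identity $a^{\tiny\textcircled{g}}=a^{(b,c)}$.
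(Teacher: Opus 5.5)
Your proof is correct, and sufficiency is the same as in the paper: check that $bgc$ satisfies $xab=b$, $cax=c$, $xax=x$ and $x\in b\mathcal{A}x\cap x\mathcal{A}c$. Necessity differs slightly. The paper cites an external result for the regularity of $cab$ and proves the identities for an \emph{arbitrary} inner inverse $g$: from $b=tcab$ and $c=cabs$ it gets $bg\,cab=b$ and $cab\,gc=c$, then uses uniqueness of the $(b,c)$-inverse to conclude $bgc=a^{(b,c)}$. This shows the representation does not depend on the choice of inner inverse. You instead exhibit the specific inner inverse $sat$ directly, which is self-contained and fully covers the existential reading of the statement.
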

\begin{proof} $\Longrightarrow $ Obviously, $a\in \mathcal{A}^{\tiny\textcircled{g}}$. By hypothesis, $a^{\tiny\textcircled{g}}=a^{(b,c)}.$
Then $a^{\tiny\textcircled{g}}ab=b$ and $caa^{\tiny\textcircled{g}}=c$. In view of~\cite[Proposition 3.3]{WCC}, $cab\in \mathcal{A}^{-}$.
Since $a^{\tiny\textcircled{g}}\in b\mathcal{A}c$, we can find some elements
$x,y\in \mathcal{A}$ such that $a^{\tiny\textcircled{g}}=bx=yc$. Then $b=(yc)ab=y(cab)=(ycab)(cab)^{-}cab=b(cab)^{-}cab.$
Moreover, we have $c=cabx=(cab)(cab)^{-}(cab)x=(cab)(cab)^{-}c$.

We easily check that
\begin{align}
b(cab)^{-}cab(cab)^{-}c &= b(cab)^{-}c, \\
b(cab)^{-}cab &= b,\\
cab(cab)^{-}c &= c.
\end{align}
Therefore $b(cab)^{-}c=a^{(b,c)}$, and then $a^{\tiny\textcircled{g}}=b(cab)^{-}c,$ as desired.

$\Longleftarrow $ Since $a^{\tiny\textcircled{g}}=b(cab)^{-}c, b(cab)^{-}(cab)=b, cab(cab)^{-}c=c,$ we check that
the preceding identities $(4.1)-(4.3)$ hold. Then $b(cab)^{-}c=a^{(b,c)}$. Accordingly, $a^{\tiny\textcircled{g}}=a^{(b,c)},$ as required.\end{proof}

\begin{cor} Let $a\in \mathcal{A}$. Then $a\in \mathcal{A}_{b,c}^{\tiny\textcircled{g}}$ if and only if $a\in \mathcal{A}^{\tiny\textcircled{g}}$ and $$\begin{array}{c}
a^{\tiny\textcircled{g}}a\in b\mathcal{A}ca, aa^{\tiny\textcircled{g}}\in ab\mathcal{A}c,\\
(1-a^{\tiny\textcircled{g}}a)b=0, c(1-aa^{\tiny\textcircled{g}})=0;\\
ca(1-a^{\tiny\textcircled{g}}a)=0, (1-aa^{\tiny\textcircled{g}})ab=0.
\end{array}$$\end{cor}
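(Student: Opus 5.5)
Write $g=a^{\tiny\textcircled{g}}$. The plan is to work directly from the definition $g=a^{(b,c)}$, i.e. $gab=b$, $cag=c$ and $g\in b\mathcal{A}g\bigcap g\mathcal{A}c$, together with the identity $g=gag$ (Theorem 2.1, $(3)\Rightarrow(2)$), and to translate each condition into the listed membership and annihilation relations. Theorem 4.1 can serve as a guide, but the direct route seems shorter.

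\emph{Necessity.} Suppose $a\in \mathcal{A}_{b,c}^{\tiny\textcircled{g}}$. Then $gab=b$ gives $(1-ga)b=0$, and $cag=c$ gives $c(1-ag)=0$. Multiplying $c(1-ag)=0$ on the right by $a$ gives $ca-caga=0$, which is $ca(1-ga)=0$. Multiplying $(1-ga)b=0$ on the left by $a$ gives $(1-ag)ab=0$. For the memberships, write $g=bu=vc$, which is possible since $g\in b\mathcal{A}\bigcap\mathcal{A}c$. Then $g=gag=bu\,a\,vc\in b\mathcal{A}c$, say $g=bwc$. Hence $ga=bwca\in b\mathcal{A}ca$ and $ag=abwc\in ab\mathcal{A}c$.

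\emph{Sufficiency.} Conversely, assume the six conditions. We must show $g\in b\mathcal{A}g\bigcap g\mathcal{A}c$, $gab=b$ and $cag=c$.
\begin{enumerate}
\item The equations $gab=b$ and $cag=c$ are exactly $(1-ga)b=0$ and $c(1-ag)=0$.
\item From $ga=bsca$ we get $g=gag=(ga)g=bscag$. Since $cag=c$, this gives $g=bsc=bs(cag)=bs\,c\,a\,g\in b\mathcal{A}g$.
\item From $ag=abtc$ we get $g=g(ag)=gabtc=btc$, using $gab=b$. Then $g=gag=g\,a\,btc\in g\mathcal{A}c$.
\end{enumerate}
Hence $g=a^{(b,c)}$ and $a\in \mathcal{A}_{b,c}^{\tiny\textcircled{g}}$. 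On this route the conditions $ca(1-ga)=0$ and $(1-ag)ab=0$ are not needed for sufficiency; they are consequences that are recorded for symmetry.

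The main thing to check is that $g=gag$ holds for the generalized group inverse. This was shown in the proof of Theorem 2.1, and also follows from $g=a^{\tiny\textcircled{g}}aa^{\tiny\textcircled{g}}a\,\cdots$ as in \cite[Theorem 3.6]{C4}. Everything else is bookkeeping with the defining identities of the $(b,c)$-inverse.
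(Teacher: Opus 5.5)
Your proof is correct and follows essentially the paper's route. Sufficiency is the same argument: you use $g=gag$ together with $ga\in b\mathcal{A}ca$ and $ag\in ab\mathcal{A}c$ to place $g$ in $b\mathcal{A}$ and in $\mathcal{A}c$, and you read off $gab=b$ and $cag=c$ from the annihilation conditions; the paper does not use the last two conditions there either. Necessity differs slightly: the paper gets $g\in b\mathcal{A}c$ from the representation $g=b(cab)^{-}c$ of Theorem 4.1, while you get it directly from $g=gag$ and $g\in b\mathcal{A}\cap\mathcal{A}c$, which avoids Theorem 4.1. You also verify the full memberships $g\in b\mathcal{A}g\cap g\mathcal{A}c$ required by the definition, whereas the paper only checks $g\in b\mathcal{A}\cap\mathcal{A}c$ and relies implicitly on the standard equivalence.
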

\begin{proof} $\Longrightarrow $ In view of Theorem 4.1 and ~\cite[Corollary 2.3]{C7}, we easily check that
$a^{\tiny\textcircled{g}}a=a^{\tiny\textcircled{g}}aa^{\tiny\textcircled{g}}a\in b\mathcal{A}ca$.
Likewise, we have $aa^{\tiny\textcircled{g}}\in ab\mathcal{A}c.$
Moreover, we verify that
$$\begin{array}{rll}
(1-a^{\tiny\textcircled{g}}a)b&=&b-a^{\tiny\textcircled{g}}(aa^{\tiny\textcircled{g}}a)b=0,\\
c(1-aa^{\tiny\textcircled{g}})&=&c-c(aa^{\tiny\textcircled{g}}a)a^{\tiny\textcircled{g}}=0;\\
ca(1-a^{\tiny\textcircled{g}}a)&=&[c(aa^{\tiny\textcircled{g}}a)a^{\tiny\textcircled{g}}]a(1-a^{\tiny\textcircled{g}}a)=0, \\
(1-aa^{\tiny\textcircled{g}})ab&=&(1-aa^{\tiny\textcircled{g}})a[a^{\tiny\textcircled{g}}(aa^{\tiny\textcircled{g}}a)b]=0,
\end{array}$$ as required.

$\Longleftarrow $ In view of ~\cite[Corollary 2.3]{C7}, we have $a^{\tiny\textcircled{g}}=a^{\tiny\textcircled{g}}(aa^{\tiny\textcircled{g}}a)a^{\tiny\textcircled{g}}$.
By hypothesis, we verify that
$$\begin{array}{rll}
a^{\tiny\textcircled{g}}&=&a^{\tiny\textcircled{g}}aa^{\tiny\textcircled{g}}\in b\mathcal{A}\bigcap \mathcal{A}c,\\
a^{\tiny\textcircled{g}}(aa^{\tiny\textcircled{g}}a)b&=&a^{\tiny\textcircled{g}}ab=b,\\
c(aa^{\tiny\textcircled{g}}a)a^{\tiny\textcircled{g}}&=&caa^{\tiny\textcircled{g}}=c.
\end{array}$$ Therefore $a^{\tiny\textcircled{g}}=(aa^{\tiny\textcircled{g}}a)^{(b,c)}$, as desired.\end{proof}

Let $X$ be a complex Banach space, and let $\mathcal{B}(X)$ denote the Banach algebra of all bounded linear operators on $X$. For any operator $A\in
\mathcal{B}(X)$, we denote its null space by $\mathcal{N}(A)=\{ x\in X | Ax = 0\}$ and its range by $\mathcal{R}(A) = \{ Ax | x\in X\}$.

\begin{cor} Let $A\in \mathcal{B}(X)$. Then $A\in \big(\mathcal{B}(X)\big)^{{\tiny\textcircled{g}}_{(B,C)}}$ if and only if $A\in \big(\mathcal{B}(X)\big)^{\tiny\textcircled{g}}$ and $$
\begin{array}{rll}
\mathcal{R}(A^{\tiny\textcircled{g}}A)&=&\mathcal{R}(B),
\mathcal{N}(A^{\tiny\textcircled{g}}A)=\mathcal{N}(CA),\\
\mathcal{R}(AA^{\tiny\textcircled{g}})&=&\mathcal{R}(AB),
\mathcal{N}(AA^{\tiny\textcircled{g}})=\mathcal{N}(C).
\end{array}$$ \end{cor}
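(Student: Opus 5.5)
The plan is to translate Corollary 4.2 into the operator setting $\mathcal{A}=\mathcal{B}(X)$, using the standard dictionary between principal ideals and ranges or null spaces. Throughout, write $G=A^{\tiny\textcircled{g}}$. The key observation is that $GA$ and $AG$ are idempotents. Indeed, $GAG=G$ holds by \cite[Corollary 2.3]{C7}, so $\mathcal{R}(GA)$, $\mathcal{N}(GA)$, $\mathcal{R}(AG)$ and $\mathcal{N}(AG)$ are complementary closed pairs. For an idempotent $E$ and an operator $T$, we will repeatedly use the two elementary facts
$$(1-E)T=0 \iff \mathcal{R}(T)\subseteq \mathcal{R}(E), \qquad T(1-E)=0 \iff \mathcal{N}(E)\subseteq \mathcal{N}(T).$$

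For the forward direction, assume $A\in \big(\mathcal{B}(X)\big)^{{\tiny\textcircled{g}}_{(B,C)}}$ and apply Corollary 4.2.
\begin{enumerate}
\item[(i)] From $(1-GA)B=0$ we get $\mathcal{R}(B)\subseteq\mathcal{R}(GA)$. From $GA\in B\mathcal{A}CA$ we get $\mathcal{R}(GA)\subseteq\mathcal{R}(B)$. Hence $\mathcal{R}(GA)=\mathcal{R}(B)$.
\item[(ii)] From $CA(1-GA)=0$ we get $\mathcal{N}(GA)\subseteq\mathcal{N}(CA)$. From $GA=BZCA$ we get $\mathcal{N}(CA)\subseteq\mathcal{N}(GA)$. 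Hence $\mathcal{N}(GA)=\mathcal{N}(CA)$.
\item[(iii)] From $(1-AG)AB=0$ and $AG\in AB\mathcal{A}C$ we get $\mathcal{R}(AG)=\mathcal{R}(AB)$.
\item[(iv)] From $C(1-AG)=0$ and $AG\in AB\mathcal{A}C$ we get $\mathcal{N}(AG)=\mathcal{N}(C)$.
\end{enumerate}

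For the converse, the annihilation conditions are recovered immediately from the four range and null-space equalities by the displayed equivalences:
\begin{enumerate}
\item[(i)] $\mathcal{R}(B)=\mathcal{R}(GA)$ gives $(1-GA)B=0$;
\item[(ii)] $\mathcal{N}(GA)=\mathcal{N}(CA)$ gives $CA(1-GA)=0$;
\item[(iii)] $\mathcal{R}(AB)=\mathcal{R}(AG)$ gives $(1-AG)AB=0$;
\item[(iv)] $\mathcal{N}(AG)=\mathcal{N}(C)$ gives $C(1-AG)=0$.
\end{enumerate}

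The real obstacle is the two factorization conditions $GA\in B\mathcal{A}CA$ and $AG\in AB\mathcal{A}C$. In a general Banach space, range inclusion does not yield factorization (there is no Douglas lemma). The way around this is to avoid those conditions altogether and return to the proof of Corollary 4.2. There, the factorizations were only used to obtain $G\in B\mathcal{A}\cap\mathcal{A}C$. We instead build that membership directly from the idempotents.

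The row-side membership is straightforward. $GA$ is an idempotent with range $\mathcal{R}(B)$. However, $G=GAG$ only shows $\mathcal{R}(G)\subseteq\mathcal{R}(GA)=\mathcal{R}(B)$, which is not yet $G\in B\mathcal{A}$. For that we need $B$ to map onto the complemented range, which requires $B$ to have closed range and a bounded partial inverse. This is where I expect the genuine difficulty.

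I would try to resolve it by expressing $G$ through the $(B,C)$-inverse. By $\mathcal{R}(AB)=\mathcal{R}(AG)$, $\mathcal{N}(C)=\mathcal{N}(AG)$ and the idempotent structure, $CAB$ should be regular: the operator $AB$ maps onto the complemented subspace $\mathcal{R}(AG)$, and $C$ is injective on it. Regularity of $CAB$ then gives the needed inner inverse. Once that is in hand, Theorem 4.1 yields $G=B(CAB)^{-}C$, which completes the converse.

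If this regularity argument stalls, the fallback is to interpret $\mathcal{A}$-ideal membership via the ranges and null spaces themselves. That is, we would treat the corollary as the operator-theoretic transcription in which $a\mathcal{A}=b\mathcal{A}$ is read as $\mathcal{R}(a)=\mathcal{R}(b)$ and $\mathcal{A}a=\mathcal{A}c$ as $\mathcal{N}(a)=\mathcal{N}(c)$. This is evidently the intended reading. The characterization $x=A^{(B,C)}\iff xAx=x,\ \mathcal{R}(x)=\mathcal{R}(B),\ \mathcal{N}(x)=\mathcal{N}(C)$ is then applied to $x=G$, using $\mathcal{R}(G)=\mathcal{R}(GA)$ and $\mathcal{N}(G)=\mathcal{N}(AG)$, both of which follow from $G=GAG$.
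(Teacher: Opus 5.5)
Your forward direction is correct and is essentially the paper's: you read the four equalities off Corollary 4.2 using the idempotents $GA$ and $AG$. The converse has a genuine gap, and no argument can close it, because the ``if'' direction is false for the paper's algebraic $(B,C)$-inverse. Take $X=\ell^2$, $A=B=I$ and $C=\operatorname{diag}(1,\frac12,\frac13,\dots)$. Then $G=I$, since the conditions $x=x^2$, $x^*=x$ and $\|I-x\|^{1/n}\to 0$ force $x=I$. Every range in the four equalities is $\ell^2$ and every null space is $\{0\}$, so all hypotheses hold. Yet $xAB=B$ forces $x=I$, and $x\in x\mathcal{B}(X)C$ would then make $C$ left invertible. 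It is not, because $\mathcal{R}(C)$ is not closed, so $A^{(B,C)}$ does not exist.

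This is exactly where your regularity step breaks. Under the hypotheses, $X=\mathcal{R}(AG)\oplus\mathcal{N}(C)$ gives $\mathcal{R}(CAB)=C(\mathcal{R}(AG))=\mathcal{R}(C)$. Injectivity of $C$ on $\mathcal{R}(AG)$ does not make this range closed, whereas a regular operator always has closed range. Appealing to Theorem 4.1 afterwards would also run backwards, since that theorem takes $G=B(CAB)^{-}C$ and the two absorption identities as hypotheses. Your fallback does not rescue the converse either. Reading $x\mathcal{A}=b\mathcal{A}$ and $\mathcal{A}x=\mathcal{A}c$ as equalities of ranges and null spaces replaces the paper's $(B,C)$-inverse by a weaker notion, so it proves a different statement.

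You found the right kind of difficulty, but you placed it on the wrong side. In the Hilbert-space setting that the adjoints in the statement presuppose, the row condition is harmless. $\mathcal{R}(B)=\mathcal{R}(GA)$ is closed, so $B^{\dagger}$ is bounded and $G=BB^{\dagger}G\in B\mathcal{B}(X)$. The real obstruction is $G\in\mathcal{B}(X)C$. Given the four equalities, this holds if and only if $\mathcal{R}(C)$ is closed, equivalently $CAB$ is regular. If $\mathcal{R}(C)$ is closed, then $G=GC^{\dagger}C$, because $\mathcal{N}(C)=\mathcal{N}(AG)\subseteq\mathcal{N}(G)$. Conversely, $G=YC$ gives $C=CAG=CAYC$, so $C$ is regular.

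So a correct converse needs this extra hypothesis. With it, $G\in B\mathcal{B}(X)\cap\mathcal{B}(X)C$ together with $GAG=G$, $GAB=B$ and $CAG=C$ (which your annihilation conditions supply) gives $G=A^{(B,C)}$. The paper's own proof of the converse has the same flaw. From $\mathcal{N}(AG)=\mathcal{N}(C)$ (in your notation) it concludes $\mathcal{R}((AG)^*)=\mathcal{R}(C^*)$, whereas only $\mathcal{R}((AG)^*)=\mathcal{N}(C)^{\perp}=\overline{\mathcal{R}(C^*)}$ follows.
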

\begin{proof} $\Longrightarrow $ In view of Corollary 4.2, $A\in \big(\mathcal{B}(X)\big)^{\tiny\textcircled{g}}$ and $$\begin{array}{c}
A^{\tiny\textcircled{g}}A\in B\big(\mathcal{B}(X)\big)CA, AA^{\tiny\textcircled{g}}\in AB\big(\mathcal{B}(X)\big)C,\\
(I-A^{\tiny\textcircled{g}}A)B=0, C(I-AA^{\tiny\textcircled{g}})=0;\\
CA(I-A^{\tiny\textcircled{g}}A)=0, (I-AA^{\tiny\textcircled{g}})AB=0.
\end{array}$$ Hence, $$\begin{array}{rll}
\mathcal{R}(A^{\tiny\textcircled{g}}A)&=&\mathcal{R}(B),\\
\mathcal{R}(AA^{\tiny\textcircled{g}})&=&\mathcal{R}(AB),
\end{array}$$

Obviously, $\mathcal{N}(A^{\tiny\textcircled{g}}A)\subseteq \mathcal{N}(CA)$. Since $CA=CAA^{\tiny\textcircled{g}}A$, we see that
$\mathcal{N}(CA)\subseteq \mathcal{N}(A^{\tiny\textcircled{g}}A).$ This implies that
$$\mathcal{N}(A^{\tiny\textcircled{g}}A)=\mathcal{N}(CA).$$

Clearly, we have $\mathcal{N}(AA^{\tiny\textcircled{g}})\subseteq \mathcal{N}(C)$. As $C=CAA^{\tiny\textcircled{g}}$, we deduce that
$\mathcal{N}(C)\subseteq \mathcal{N}(AA^{\tiny\textcircled{g}})$. Therefore $\mathcal{N}(AA^{\tiny\textcircled{g}})=\mathcal{N}(C)$, as required.

$\Longleftarrow $ By hypothesis, $A\in \big(\mathcal{B}(X)\big)^{\tiny\textcircled{g}}$.

Claim 1. $A^{\tiny\textcircled{g}}\in B\big(\mathcal{B}(X)\big)C$. Obviously, $A^{\tiny\textcircled{g}}=(A^{\tiny\textcircled{g}}A)A^{\tiny\textcircled{g}}\in
\mathcal{R}(B)$; hence, $A^{\tiny\textcircled{g}}\in B\big(\mathcal{B}(X)\big)$. On the other hand,
$\mathcal{N}(AA^{\tiny\textcircled{g}})=\mathcal{N}(C)$. Thus,
$\mathcal{R}((AA^{\tiny\textcircled{g}})^*)=\mathcal{R}(C^*)$. Hence, $\mathcal{R}(AA^{\tiny\textcircled{g}})=\mathcal{R}(C^*)$.
Thus $$(A^{\tiny\textcircled{g}})^*=(AA^{\tiny\textcircled{g}})(A^{\tiny\textcircled{g}})^*\in C^*\big(\mathcal{B}(X)\big).$$
Accordingly, $A^{\tiny\textcircled{g}}\in \big(\mathcal{B}(X)\big)C$. Therefore $$A^{\tiny\textcircled{g}}=A^{\tiny\textcircled{g}}AA^{\tiny\textcircled{g}}\in B\big(\mathcal{B}(X)\big)C.$$

Claim 2. $A^{\tiny\textcircled{g}}AB=B.$  Write $B=A^{\tiny\textcircled{g}}AY$ for some $Y\in \mathcal{B}(X)$. Then $A^{\tiny\textcircled{g}}AB=
(A^{\tiny\textcircled{g}}A)(A^{\tiny\textcircled{g}}AY)=A^{\tiny\textcircled{g}}AY=B$, as required.

Claim 3. $CAA^{\tiny\textcircled{g}}=C$. Since $I-AA^{\tiny\textcircled{g}}\in \mathcal{N}(AA^{\tiny\textcircled{g}})$, we deduce that
$I-AA^{\tiny\textcircled{g}}\in \mathcal{N}(C)$. Then $CAA^{\tiny\textcircled{g}}=C$.\end{proof}

Next, we turn our attention to the relationships between generalized group inverses and generalized core-EP inverses.

\begin{thm} Let $a\in \mathcal{A}_{b,c}^{\tiny\textcircled{d}}$. Then $a\in \mathcal{A}_{(b,ca)}^{\tiny\textcircled{g}}$.
In this case, $$a_{(b,ca)}^{\tiny\textcircled{g}}=
\big(a_{b,c}^{\tiny\textcircled{d}}\big)^2a.$$\end{thm}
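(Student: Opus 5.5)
The plan has two parts. First, show that $z:=\big(a_{b,c}^{\tiny\textcircled{d}}\big)^2a$ is the generalized group inverse of $a$. Second, show that $z$ is the $(b,ca)$-inverse of $a$, using the characterization $z=zaz$, $z\mathcal{A}=b\mathcal{A}$, $\mathcal{A}z=\mathcal{A}ca$ recalled in the introduction.

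Write $x=a_{b,c}^{\tiny\textcircled{d}}$, the generalized core inverse. It satisfies $x=ax^2$, $(ax)^*=ax$ and $\lim\|a^n-xa^{n+1}\|^{1/n}=0$, and by hypothesis also $x=a^{(b,c)}$. Hence $x=xax$, $x\mathcal{A}=b\mathcal{A}$, $\mathcal{A}x=\mathcal{A}c$, $xab=b$ and $cax=c$. I also record $a^nx^{n+1}=x$ for all $n$, which follows by iterating $x=ax^2$.

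\textbf{Step 1: $z=a^{\tiny\textcircled{g}}$.} The defining identities are checked directly.
\begin{itemize}
\item $az^2=ax^2ax^2a=xax\cdot xa=x^2a=z$, using $xax=x$.
\item $a^*a^2z=a^*a^2x^2a=a^*(ax)a$, and this is self-adjoint because $(ax)^*=ax$.
\item For the limit, $a^n-za^{n+1}=(a^n-xa^{n+1})+x(a^n-xa^{n+1})a$. Hence $\|a^n-za^{n+1}\|^{1/n}\le\|a^n-xa^{n+1}\|^{1/n}(1+\|x\|\|a\|)^{1/n}\to0$.
\end{itemize}
So $a\in\mathcal{A}^{\tiny\textcircled{g}}$ and $a^{\tiny\textcircled{g}}=z$ by uniqueness.

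\textbf{Step 2: $z=a^{(b,ca)}$.} First, $zaz=x^2a(ax^2)a=x^2axa=x(xax)a=x^2a=z$.

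For the right ideals, $z\mathcal{A}\subseteq x\mathcal{A}=b\mathcal{A}$ is immediate. For the reverse inclusion it suffices to show $zab=b$, equivalently $x^2a^2b=b$. Since $b=xab$, this reduces to showing that $xa^2x$ acts like $x$ on $ab$.

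For the left ideals, $\mathcal{A}z=\mathcal{A}x^2a\subseteq\mathcal{A}xa=\mathcal{A}ca$. For the reverse inclusion it suffices to show $caza=ca$, i.e. $cax^2a^2=ca$. Since $cax=c$, this amounts to $cax^2a^2=caxa$, which again reduces to controlling $x^2a^2$ against $xa$.

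\textbf{Main obstacle.} The key identity I need is of the form $xa^2x=x$, or at least $x(xa^2-a)$ annihilating the relevant elements. It does not follow formally from $x=ax^2$ and $x=xax$ alone. I will obtain it with the spectral-limit trick used repeatedly in Theorem 2.1. Write $x=a^nx^{n+1}$ and compare $xa^2x=xa^2a^nx^{n+2}$ with $a^{n+1}x^{n+2}$ via $xa^{n+1}\approx a^n$. This gives
$$\|xa\cdot x-x\cdots\|\le C\|a^n-xa^{n+1}\|\,\|x\|^{n+1},$$
whose $n$-th root tends to $0$, forcing equality. The same device yields $xa\,x^2a^2=xa\cdot xa$, equivalently the needed $cax^2a^2=caxa$.

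With these identities, $zab=b$ and $caza=ca$ hold, so $z\mathcal{A}=b\mathcal{A}$ and $\mathcal{A}z=\mathcal{A}ca$. Therefore $z=a^{(b,ca)}=a^{\tiny\textcircled{g}}$, that is, $a\in\mathcal{A}_{(b,ca)}^{\tiny\textcircled{g}}$ with the stated formula. Alternatively, once $zab=b$ and $caza=ca$ are in hand, one could invoke Theorem 2.1(3) to conclude.
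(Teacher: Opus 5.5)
Your Step 1 matches the paper's argument, and your plan for Step 2 (verify $z=zaz$, $zab=b$, and a reverse inclusion for $\mathcal{A}z=\mathcal{A}ca$) follows the paper's route. The gap is in Step 2: the identities you single out are the wrong ones, and the one for the left ideal is false.

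For the right ideal, $xa^2x=x$ fails in general. For invertible $a$ you have $x=a^{-1}$ and $xa^2x=1$. Even if it held, it would only give $zab=x(xa^2x)ab=xb$. What is true, and what your limit device actually proves, is $xa^2x=ax$. Since $ax=a^{n+1}x^{n+1}$, you get $xa^2x-ax=(xa^{n+2}-a^{n+1})x^{n+1}$, and the $(n+1)$-st roots of the norms tend to $0$. Then $x^2a^2x=xax=x$, so $zab=x^2a^2(xab)=(x^2a^2x)ab=xab=b$. This is exactly the step $x^2a^2xab=xaxab$ in the paper.

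For the left ideal, the test $caza=ca$ is the wrong one. It only puts $ca$ in $\mathcal{A}za$, not in $\mathcal{A}z$. It is also false in general, so no limit argument can supply it. By $cax=c$ it is equivalent to $cxa^2=ca$. With $b=c=x$, which is admissible because $xax=x$ gives $x=a^{(x,x)}$, it reads $x^2a^2=xa$. Your claimed identity $xa\,x^2a^2=xa\cdot xa$ reduces to the same thing.

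Here is a counterexample. Take $a=E_{11}+E_{12}+E_{23}\in\mathbb{C}^{3\times3}$ (matrix units). Then $x=E_{11}$: indeed $ax=E_{11}$ is Hermitian, $ax^2=x$, and $xa^{n+1}=a^n=E_{11}+E_{12}+E_{13}$ for $n\ge 2$. But $x^2a^2=E_{11}+E_{12}+E_{13}\ne E_{11}+E_{12}=xa$.

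The condition you need is the defining one for the $(b,ca)$-inverse, $(ca)az=ca$, and it is purely algebraic. From $a^2x^2=a(ax^2)=ax$ you get $ca\,a\,z=c(a^2x^2)a=(cax)a=ca$, so $ca\in\mathcal{A}z$. With $zab=b$ and $(ca)az=ca$ in place, your argument closes as the paper's does.
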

\begin{proof} By hypothesis, we have $$a_{b,c}^{\tiny\textcircled{d}}=a(a_{b,c}^{\tiny\textcircled{d}})^2, (aa_{b,c}^{\tiny\textcircled{d}})^*=aa_{b,c}^{\tiny\textcircled{d}}~\mbox{and} ~\lim\limits_{n\to \infty}||a^n-a_{b,c}^{\tiny\textcircled{d}}a^{n+1}||^{\frac{1}{n}}=0.$$ Set
$x=(a_{b,c}^{\tiny\textcircled{d}})^2a$. Then we check that
$$\begin{array}{rll}
ax^2&=&[a(a_{b,c}^{\tiny\textcircled{d}})^2][a(a_{b,c}^{\tiny\textcircled{d}})^2]a=(a_{b,c}^{\tiny\textcircled{d}})^2a=x,\\
(a^*a^2x)^*&=&(a^*a^2(a_{b,c}^{\tiny\textcircled{d}})^2a)^*=(a^*aa_{b,c}^{\tiny\textcircled{d}}a)^*\\
&=&a^*(aa_{b,c}^{\tiny\textcircled{d}})^*a=a^*(aa_{b,c}^{\tiny\textcircled{d}})a=a^*a^2x,\\
||a^n-xa^{n+1}||^{\frac{1}{n}}&=&||a^n-(a_{b,c}^{\tiny\textcircled{d}})^2a^{n+2}||^{\frac{1}{n}}\\
&=&||a^n-a_{b,c}^{\tiny\textcircled{d}}a^{n+1}+(a_{b,c}^{\tiny\textcircled{d}}(a^{n}-a_{b,c}^{\tiny\textcircled{d}}a^{n+1})a||^{\frac{1}{n}}\\
&=&||a^n-a_{b,c}^{\tiny\textcircled{d}}a^{n+1}||^{\frac{1}{n}}+||a_{b,c}^{\tiny\textcircled{d}}||^{\frac{1}{n}}||a^{n}-a_{b,c}^{\tiny\textcircled{d}}a^{n+1})||^{\frac{1}{n}}
||a||^{\frac{1}{n}}\\
&=&[1+||a_{b,c}^{\tiny\textcircled{d}}||^{\frac{1}{n}}||a||^{\frac{1}{n}}]||a^n-a_{b,c}^{\tiny\textcircled{d}}a^{n+1}||^{\frac{1}{n}}.
\end{array}$$ Hence $\lim\limits_{n\to \infty}||a^n-xa^{n+1}||^{\frac{1}{n}}=0.$ Therefore $a^{\tiny\textcircled{g}}=(a_{b,c}^{\tiny\textcircled{d}})^2a.$

We shall verify that $$a^{\tiny\textcircled{g}}=a^{(b,ca)}.$$ Clearly, $a^{\tiny\textcircled{g}}=a^{\tiny\textcircled{g}}aa^{\tiny\textcircled{g}}$.
By hypothesis, $a^{\tiny\textcircled{d}}=a^{(b,c)}$. Then we have $$\begin{array}{rll}
a^{\tiny\textcircled{d}}&\in &b\mathcal{A}\bigcap \mathcal{A}c,\\
b&=&a^{\tiny\textcircled{d}}ab,\\
c&=&caa^{\tiny\textcircled{d}}.
\end{array}$$ Hence, we verify that
$$\begin{array}{rll}
a^{\tiny\textcircled{g}}&=&(a_{b,c}^{\tiny\textcircled{d}})^2a\in b\mathcal{A}\bigcap \mathcal{A}ca,\\
a^{\tiny\textcircled{g}}ab&=&(a_{b,c}^{\tiny\textcircled{d}})^2a^2b\\
&=&(a_{b,c}^{\tiny\textcircled{d}})^2a^2a^{\tiny\textcircled{d}}ab\\
&=&a_{b,c}^{\tiny\textcircled{d}}aa^{\tiny\textcircled{d}}ab\\
&=&a^{\tiny\textcircled{d}}ab=b,\\
(ca)aa^{\tiny\textcircled{g}}&=&c[a^2(a_{b,c}^{\tiny\textcircled{d}})^2]a\\
&=&[caa_{b,c}^{\tiny\textcircled{d}}]a=ca.
\end{array}$$ Therefore $a^{\tiny\textcircled{g}}=a^{(b,ca)}.$\end{proof}

\begin{cor} Let $a\in \mathcal{A}_{b,c}^{\tiny\textcircled{d}}$. Then $a_{(b,ca)}^{\tiny\textcircled{g}}=x~\mbox{if and only if} ~ax^2=x, ax=a_{b,c}^{\tiny\textcircled{d}}a.$\end{cor}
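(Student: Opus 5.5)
By Theorem 4.4, $a$ has a generalized group inverse, and $a^{\tiny\textcircled{g}}=a_{(b,ca)}^{\tiny\textcircled{g}}=(a_{b,c}^{\tiny\textcircled{d}})^2a$. Write $d:=a_{b,c}^{\tiny\textcircled{d}}$, so that
$$d=ad^2,\qquad (ad)^*=ad,\qquad \lim_{n\to\infty}\|a^n-da^{n+1}\|^{1/n}=0.$$
The plan is to show both directions of the stated equivalence, using only uniqueness of the generalized group inverse and these defining identities.

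\textbf{Necessity.} Suppose $x=a_{(b,ca)}^{\tiny\textcircled{g}}$. Then $x=a^{\tiny\textcircled{g}}$, so $ax^2=x$ is part of the definition. From $x=d^2a$ we get
$$ax=ad^2a=da,$$
using $ad^2=d$. This gives the second condition.

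\textbf{Sufficiency.} Suppose $ax^2=x$ and $ax=da$. The idea is to show that $x$ satisfies the three defining conditions of the generalized group inverse; uniqueness then gives $x=a^{\tiny\textcircled{g}}=d^2a$, and hence $x=a_{(b,ca)}^{\tiny\textcircled{g}}$ by Theorem 4.4.

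The first condition, $x=ax^2$, holds by hypothesis.

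For the self-adjointness condition, compute
$$a^*a^2x=a^*a(ax)=a^*ada.$$
Since $ad$ is self-adjoint, this element is self-adjoint.

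For the limit condition, the first step is to identify $x$ concretely. We have
$$x=ax^2=(ax)x=da\,x.$$
Iterating $x=ax^2$ gives $x=a^nx^{n+1}$, so $x\in a^n\mathcal{A}$ for all $n$. I would then use the relation $ax=da$ to compute $d^2a\cdot ax=d^2a\,da$.

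The cleanest route is to show $x=d^2a$ directly:
$$x=ax^2=da\,x=d(ax)=d\,da=d^2a.$$
This uses only $ax=da$ twice and no limit argument. Once $x=d^2a$ is established, the limit condition follows exactly as in the proof of Theorem 4.4. So in fact $x=d^2a=a^{\tiny\textcircled{g}}$, which completes sufficiency.

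The main obstacle I anticipated was the quasinilpotent limit condition in the sufficiency direction. It disappears once the identity $x=dax=d(da)$ is noticed, since that reduces everything to Theorem 4.4. I would double-check that $ax^2=x$ is genuinely used (it is, in the first equality of the chain $x=ax^2=dax$) and that no commutativity of $a$ and $d$ is assumed anywhere.
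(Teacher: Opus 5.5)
Your proposal is correct and takes essentially the paper's route. Necessity reads off $ax=ad^2a=da$ from $x=d^2a$ given by Theorem 4.4. Sufficiency uses the same chain $x=ax^2=(ax)x=d(ax)=d^2a$ and then invokes Theorem 4.4, so the self-adjointness check and the $x=a^nx^{n+1}$ remarks are unnecessary.
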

\begin{proof} $\Longrightarrow $ In view of Theorem 4.4, $a\in \mathcal{A}_{(b,ca)}^{\tiny\textcircled{g}}$ and $x=a_{(b,ca)}^{\tiny\textcircled{g}}=\big(a_{b,c}^{\tiny\textcircled{d}}\big)^2a$.
Therefore $ax^2=x$ and $ax=a\big(a_{b,c}^{\tiny\textcircled{d}}\big)^2a=a_{b,c}^{\tiny\textcircled{d}}a$, as required.

$\Longleftarrow $ By hypotheses, $ax^2=x, ax=a_{b,c}^{\tiny\textcircled{d}}a$. Then we have
$x=ax^2=(ax)x=a_{b,c}^{\tiny\textcircled{d}}(ax)=\big(a_{b,c}^{\tiny\textcircled{d}}\big)^2a$. In light of Theorem 4.4,
$x=a_{(b,ca)}^{\tiny\textcircled{g}}$, as desired.\end{proof}

\begin{cor} Let $a\in \mathcal{A}^{\tiny\textcircled{d}}$. Then $a\in \mathcal{A}_{\big(a^d,(a^d)^*a\big)}^{\tiny\textcircled{g}}$.
In this case, $$a_{(a^d,(a^d)^*a)}^{\tiny\textcircled{g}}=(a^{\tiny\textcircled{d}})^2a.$$\end{cor}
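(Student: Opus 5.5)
The plan is to deduce this from Theorem 4.4 by taking $b=a^d$ and $c=(a^d)^*$. With these choices, $ca=(a^d)^*a$, so the conclusion of Theorem 4.4 is exactly $a\in \mathcal{A}_{(a^d,(a^d)^*a)}^{\tiny\textcircled{g}}$. It also gives the formula $a_{(a^d,(a^d)^*a)}^{\tiny\textcircled{g}}=(a_{b,c}^{\tiny\textcircled{d}})^2a=(a^{\tiny\textcircled{d}})^2a$. So the only thing to prove is that $a\in \mathcal{A}_{a^d,(a^d)^*}^{\tiny\textcircled{d}}$, i.e. that the generalized core inverse of $a$ coincides with $a^{(a^d,(a^d)^*)}$.

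Throughout, write $x=a^{\tiny\textcircled{d}}$. The standard facts I will use are:
\[
a\in \mathcal{A}^d,\quad x=ax^2,\quad (ax)^*=ax,\quad \lim_{n\to\infty}\|a^n-xa^{n+1}\|^{1/n}=0,\quad x=a^dax,\quad ax=aa^d(aa^d)^*.
\]
The identity $x=a^dax$ follows from $x=a^n x^{n+1}$ and $a^n=a^da^{n+1}+(a^n-a^da^{n+1})$, using the limit argument of Theorem 3.1 with the quasinilpotent remainder. From $x=a^dax$ we get $x\in a^d\mathcal{A}$, and the idempotent $ax$ satisfies $ax\,aa^d=aa^d$ by the analogous limit argument.

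I then verify the three $(b,c)$-inverse conditions in turn.

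First, $x=xax$. This holds since $ax^2=x$ together with $x\in a^d\mathcal{A}$ and $xaa^d=a^d$; the latter is obtained from $xa^{n+1}\approx a^n$ multiplied by $(a^d)^{n}$.

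Second, $xaa^d=a^d$. Here I again use $xa^{n+1}(a^d)^{n+1}=x\,aa^d$ and the limit argument, which gives $xaa^d=a^daa^d=a^d$.

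Third, $(a^d)^*ax=(a^d)^*$. Since $ax$ is Hermitian, $(a^d)^*ax=(ax\,a^d)^*$. So it suffices to show $ax\,a^d=a^d$, which holds because $ax\,aa^d=aa^d$ and $a^d=aa^d\,a^d$.

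For the membership $x\in\mathcal{A}(a^d)^*$, I write $x=x\,ax=x(ax)^*$. It then suffices to show $ax\in \mathcal{A}(a^d)^*$, or equivalently $ax\in a^d\mathcal{A}$ after applying $*$. This holds since $ax=a\,a^dax=a^d(a^2x)$ up to commuting $a$ and $a^d$.

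The main obstacle is the pair of "limit" identities $xaa^d=a^d$ and $ax\,aa^d=aa^d$, i.e. showing that the generalized core inverse and the g-Drazin inverse share the same range and the same projector behaviour on $aa^d\mathcal{A}$. I would handle both with the $\|\cdot\|^{1/n}\to0$ trick used repeatedly above: each difference can be written as a fixed bounded element times $(a^n-xa^{n+1})$ or $(a^n-a^da^{n+1})$ times $(a^d)^n$ or $x^n$, so it has vanishing $n$-th root and must therefore be $0$.

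Once the three conditions and the range inclusions are in place, $x=a^{(a^d,(a^d)^*)}$ follows, and Theorem 4.4 finishes the proof.
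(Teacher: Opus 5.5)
Your argument is the paper's: specialize Theorem 4.4 to $b=a^d$, $c=(a^d)^*$. The paper obtains $a^{\tiny\textcircled{d}}=a^{(a^d,(a^d)^*)}$ by citing an external corollary, while you verify it directly via $x=a^dax$, $xaa^d=a^d$, $ax\,aa^d=aa^d$ and $(ax)^*=ax$, and those verifications are correct. One correction: the identity $ax=aa^d(aa^d)^*$ in your list of standard facts is false in general (for $a=\bigl(\begin{smallmatrix}1&1\\0&0\end{smallmatrix}\bigr)$ one has $ax=\mathrm{diag}(1,0)$ but $aa^d(aa^d)^*=\mathrm{diag}(2,0)$), but nothing in your proof uses it, so just delete it.
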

\begin{proof} Since $a\in \mathcal{A}^{\tiny\textcircled{d}}$, it follows by ~\cite[Corollary 4.3]{C8} that
$a\in \mathcal{A}_{a^d,(a^d)^*}^{\tiny\textcircled{d}}$. According to Theorem 4.4, $a_{\big(a^d,(a^d)^*a\big)}^{\tiny\textcircled{g}}=(a^{\tiny\textcircled{d}})^2a,$ as required.\end{proof}

\begin{cor} Let $A\in {\Bbb C}^{n\times n}$. Then $A_{(A^D,(A^D)^*A)}^{\tiny\textcircled{W}}=(A^{\tiny\textcircled{D}})^2A$.\end{cor}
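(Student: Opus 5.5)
The corollary is the matrix specialization of Corollary 4.6 in $\mathcal{A}={\Bbb C}^{n\times n}$ with the conjugate transpose as involution. I will first check that every $A\in{\Bbb C}^{n\times n}$ meets the hypothesis of Corollary 4.6. Every complex square matrix is Drazin invertible, so $A^d=A^D$. Every complex square matrix also has a core-EP inverse $A^{\tiny\textcircled{D}}$. In a Banach $*$-algebra, an element with a Drazin inverse has its generalized core inverse equal to the core-EP inverse. Hence $A\in({\Bbb C}^{n\times n})^{\tiny\textcircled{d}}$ and $A^{\tiny\textcircled{d}}=A^{\tiny\textcircled{D}}$. Corollary 4.6 then gives $A\in({\Bbb C}^{n\times n})^{\tiny\textcircled{g}}_{(A^D,(A^D)^*A)}$ and
$$A^{\tiny\textcircled{g}}_{(A^D,(A^D)^*A)}=(A^{\tiny\textcircled{D}})^2A.$$

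Next I will identify the generalized group inverse of a matrix with its weak group inverse. Since $A$ is Drazin invertible, Lemma 3.4 (the equivalence $\mathcal{A}^{\tiny\textcircled{W}}_{b,c}=\mathcal{A}^{\tiny\textcircled{g}}_{b,c}\cap\mathcal{A}^D$) applied with $b=A^D$ and $c=(A^D)^*A$ shows that $A$ has a weak $(A^D,(A^D)^*A)$-EP inverse.

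The two inverses also coincide directly. Let $x=A^{\tiny\textcircled{g}}$. Then $x=Ax^2$ and $(A^*A^2x)^*=A^*A^2x$ hold by definition. The condition $\|A^n-xA^{n+1}\|^{1/n}\to0$ must be upgraded to $A^k=xA^{k+1}$ for $k=\operatorname{ind}(A)$. I will do this by writing $A^k=A^DA^{k+1}$ and using $A^m=A^DA^{m+1}$ for all $m\ge k$. This gives
$$A^k-xA^{k+1}=(A^{k}-xA^{k+1})\,A^{n-k}(A^D)^{n-k}$$
for every $n\ge k$. The right-hand side is bounded by $\|A^n-xA^{n+1}\|\,\|(A^D)^{n-k}\|$. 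Taking $n$-th roots and letting $n\to\infty$ forces the left side to vanish. By uniqueness of the weak group inverse, $A^{\tiny\textcircled{W}}=A^{\tiny\textcircled{g}}$, and therefore $A^{\tiny\textcircled{W}}_{(A^D,(A^D)^*A)}=(A^{\tiny\textcircled{D}})^2A$.

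I expect the main obstacle to be only bookkeeping: matching the paper's notions $A^{\tiny\textcircled{d}}$ and $A^d$ with the classical $A^{\tiny\textcircled{D}}$ and $A^D$ in finite dimensions. I would cite the standard fact that core-EP and generalized core-EP inverses coincide for Drazin invertible elements. The index argument above is the only computational check.
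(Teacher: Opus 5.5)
Your proposal is correct and follows the paper's route. The paper's proof is just ``Straightforward by Corollary 4.6,'' and you apply that corollary in ${\Bbb C}^{n\times n}$, making explicit the identifications the paper leaves implicit: $A^d=A^D$, $A^{\tiny\textcircled{d}}=A^{\tiny\textcircled{D}}$, and $A^{\tiny\textcircled{g}}=A^{\tiny\textcircled{W}}$. Your argument that upgrades the limit condition to $A^k=xA^{k+1}$ via $(A^k-xA^{k+1})A^{n-k}=A^n-xA^{n+1}$ and the index of $A$ is valid.
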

\begin{proof} Straightforward by Corollary 4.6.\end{proof}

Let $\mathcal{B}(X)$ be the Banach algebra of bounded linear operators over a Hilbert space $X$. Then the algebra $\mathcal{B}(X)$ is a Banach algebra with the adjoint operation as its proper involution. Let $A$ in $\mathcal{B}(X)^d$. In~\cite{MD4}, Mosic and D. Zheng introduced and studied weak group inverse for a Hilbert space operator. The weak group inverse of $A$ is defined by $A^{\bigotimes}=(A^{\tiny\textcircled{d}})^2A$.

\begin{cor} Let $A\in \mathcal{B}(X)^d$. Then $A_{(A^d,(A^d)^*A)}^{\bigotimes}=(A^{\tiny\textcircled{d}})^2A$.\end{cor}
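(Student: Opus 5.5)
The plan is to deduce this from Corollary 4.6, applied in the Banach *-algebra $\mathcal{B}(X)$ with the adjoint as its proper involution.

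First, take $A\in \mathcal{B}(X)^d$. I will check that $A$ has a generalized core-EP inverse $A^{\tiny\textcircled{d}}$. In a $C^*$-algebra (in particular $\mathcal{B}(X)$ for a Hilbert space $X$), every g-Drazin invertible element is generalized core-EP invertible. The argument goes as follows.

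The spectral idempotent $AA^d$ is an idempotent in a $C^*$-algebra. Hence there is a projection $Q$ with $Q\,\mathcal{B}(X)=AA^d\,\mathcal{B}(X)$; concretely $Q=AA^d(AA^d)^{\dagger}$, since idempotents in $C^*$-algebras are Moore--Penrose invertible. With this $Q$ one obtains $A^{\tiny\textcircled{d}}=A^dQ$. This is the standard representation in Mosić's work, which is also what underlies the definition of $A^{\bigotimes}$ in~\cite{MD4}.

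So $A\in \mathcal{B}(X)^{\tiny\textcircled{d}}$, and Corollary 4.6 yields $A\in \mathcal{B}(X)_{(A^d,(A^d)^*A)}^{\tiny\textcircled{g}}$ with
\[
A_{(A^d,(A^d)^*A)}^{\tiny\textcircled{g}}=(A^{\tiny\textcircled{d}})^2A .
\]

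Second, I will identify the objects in the statement. By definition, $A^{\bigotimes}=(A^{\tiny\textcircled{d}})^2A$. In the proof of Theorem 4.4 it was shown that this element satisfies the three defining equations of the generalized group inverse, so $A^{\tiny\textcircled{g}}=(A^{\tiny\textcircled{d}})^2A=A^{\bigotimes}$. The same theorem shows that this element equals $A^{(A^d,(A^d)^*A)}$. Therefore the Mosić--Zhang weak group inverse coincides with the generalized $(A^d,(A^d)^*A)$-EP inverse, and the displayed formula follows.

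The main obstacle is the first step, namely that every $A\in\mathcal{B}(X)^d$ is generalized core-EP invertible. The first thing I would try is to cite the known result for $C^*$-algebras (Mosić). Failing that, I would verify directly that $x=A^dQ$ satisfies $x=Ax^2$, $(Ax)^*=Ax$ and $\lim_{n\to\infty}\|A^n-xA^{n+1}\|^{1/n}=0$, using $QAA^d=AA^d$ and $AA^dQ=Q$.
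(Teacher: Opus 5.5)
Your proposal is correct and follows essentially the paper's route. You first establish that $A$ is generalized core-EP invertible in $\mathcal{B}(X)$, then invoke the Theorem 4.4/Corollary 4.6 machinery to get $(A^{\tiny\textcircled{d}})^2A=A^{\tiny\textcircled{g}}=A^{(A^d,(A^d)^*A)}$. The only differences are minor: the paper re-checks $A^{\tiny\textcircled{d}}AA^d=A^d$ and $(A^d)^*AA^{\tiny\textcircled{d}}=(A^d)^*$ by hand instead of quoting Corollary 4.6 directly, while you supply the explicit justification $A^{\tiny\textcircled{d}}=A^dQ$ for existence, which the paper merely asserts.
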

\begin{proof} Since $A\in \mathcal{B}(X)^d$, it has generalized core inverse.
Obviously, $A^{\tiny\textcircled{d}}AA^d=A^d, (A^d)^*AA^{\tiny\textcircled{d}}=(A^d)^*$.
We claim that $A\in \mathcal{B}(X)_{(A^d, (A^d)^*)}^{\tiny\textcircled{d}}$. Therefore $A\in\mathcal{B}(X)_{(A^d,(A^d)^*A)}^{\bigotimes}$ by
Corollary 4.7.\end{proof}

To illustrate Corollary 4.8, we present the following numerical example.

\begin{exam}\end{exam} Let $A=\left(
\begin{array}{cc}
1&1\\
1&1
\end{array}
\right)\in {\Bbb C}^{2\times 2}$. Then $A^{\tiny\textcircled{\#}}=\left(
\begin{array}{cc}
\frac{1}{4}&\frac{1}{4}\\
\frac{1}{4}&\frac{1}{4}
\end{array}
\right).$ Let $S\in \mathcal{B}(\Bbb {C}\bigoplus {\Bbb C})$ be the linear operator given by the $2\times 2$ matrix $A$.

Let $X=\ell^2({\Bbb N})$ and the linear operator $S\in \mathcal{B}(X)$ is given by
$$Se_n=\frac{1}{2^n}e_{n+1} (n=0,1,2,\cdots ),$$ where $\{ e_n\}_{n=0}^{\infty}$ is a standard orthogonal basis, i.e., $S$ is defined by
$$S(x_0,x_1,x_2,\cdots )=(x_0, \frac{1}{2}x_1, \frac{1}{4}x_2, \frac{1}{8}x_3,\cdots ).$$
Then $||S^n||=1\cdot \frac{1}{2}\cdot \frac{1}{2^2}\cdot \frac{1}{2^{n-1}}=2^{-\frac{n(n-1)}{2}}$, and so $$\lim\limits_{n\to \infty}||S^n||^{\frac{1}{n}}=0.$$ Thus $S$ is quasinilpotent.

Set $H=\Bbb {C}\bigoplus {\Bbb C}\oplus X$. Construct a block diagonal operator $T=A\bigoplus S$, where $T|_{\Bbb {C}\bigoplus {\Bbb C}}=A\bigoplus 0_X$,
$T|_{X}=0_{2\times 2}\bigoplus S$. By ~\cite[Theorem 2.2]{C7}, $T\in \big(\mathcal{B}(H)\big)^{\tiny\textcircled{d}}$ and $$T^{\tiny\textcircled{d}}=diag(\left(
\begin{array}{cc}
\frac{1}{4}&\frac{1}{4}\\
\frac{1}{4}&\frac{1}{4}
\end{array}
\right),0_X).$$ Set $B=T^d=diag(\left(
\begin{array}{cc}
\frac{1}{4}&\frac{1}{4}\\
\frac{1}{4}&\frac{1}{4}
\end{array}
\right),0_X)$ and $C=(T^d)^*T=diag(\left(
\begin{array}{cc}
\frac{1}{2}&\frac{1}{2}\\
\frac{1}{2}&\frac{1}{2}
\end{array}
\right),0_X).$ In view of Corollary ???, we have
$$\begin{array}{rll}
T_{B,C}^{\bigotimes}&=&(T^{\tiny\textcircled{d}})^2T\\
&=&diag(\left(
\begin{array}{cc}
\frac{1}{4}&\frac{1}{4}\\
\frac{1}{4}&\frac{1}{4}
\end{array}
\right),0_X)^2diag(\left(
\begin{array}{cc}
1&1\\
1&1
\end{array}
\right),S)\\
&=&diag(\left(
\begin{array}{cc}
\frac{1}{4}&\frac{1}{4}\\
\frac{1}{4}&\frac{1}{4}
\end{array}
\right),0_X).
\end{array}$$

Finally, we reveal an intrinsic connection between the generalized $(b,b)$-EP inverse and the validity of the converse law for the generalized group inverse.¡±

\begin{thm} Let $a,b\in R$. Then the following are equivalent:\end{thm}
\begin{enumerate}
\item [(1)] $a\in R_{b,b}^{\tiny\textcircled{g}}$.
\item [(2)] $a\in R^{\tiny\textcircled{g}}, b\in R^{\#}$ and $aa^{\tiny\textcircled{g}}=bb^{\#}.$
\item [(3)] $a,ab\in R^{\tiny\textcircled{g}}, b\in R^{\#}, a^{\tiny\textcircled{g}}R=bR$ and $(ab)^{\tiny\textcircled{g}}=b^{\tiny\textcircled{g}}a^{\tiny\textcircled{g}}.$
\end{enumerate}
\begin{proof} $(1)\Rightarrow (2)$ By hypothesis, we have $a\in R^{\tiny\textcircled{g}}$ and
$(a^2a^{\tiny\textcircled{g}})^{\#}=(a^2a^{\tiny\textcircled{g}})^{(b,b)}$. In view of ~\cite[Theorem 5.1.18]{WC1}, $b\in R^{\#}$ and
$$(a^2a^{\tiny\textcircled{g}})(a^2a^{\tiny\textcircled{g}})^{\#}=bb^{\#}.$$ Therefore
$aa^{\tiny\textcircled{g}}=bb^{\#}.$

$(2)\Rightarrow (1)$ Clearly, $a^2a^{\tiny\textcircled{g}}\in \mathcal{A}^{\#}$. In view of ~\cite[Theorem 5.1.9]{WC1}, we have
$(a^2a^{\tiny\textcircled{g}})^{\#}=(a^2a^{\tiny\textcircled{g}})^{(b,c)}$. This implies that $a\in R_{b,b}^{\tiny\textcircled{g}}$, as asserted.

$(1)\Rightarrow (3)$ By the argument above, $a\in R^{\tiny\textcircled{g}}$ and $b\in R^{\#}$. In view of ~\cite[Proposition 6.1]{DM}, we have $a^{\tiny\textcircled{g}}R=bR$. Set $x=b^{\#}a^{\tiny\textcircled{g}}.$

Obviously, $b=a^{\tiny\textcircled{g}}ab$, and then $a^{\tiny\textcircled{g}}ab=b$.
By the argument above, we verify that
$$\begin{array}{rll}
abx&=&a(bb^{\#})a^{\tiny\textcircled{g}}=a(aa^{\tiny\textcircled{g}})a^{\tiny\textcircled{g}}\\
 &=&aa^{\tiny\textcircled{g}},\\
 abx^2&=&(aa^{\tiny\textcircled{g}})(b^{\#}a^{\tiny\textcircled{g}})\\
 &=&bb^{\#}(b^{\#}a^{\tiny\textcircled{g}})\\
 &=&b^{\#}a^{\tiny\textcircled{g}}=x,\\
 (ab)^*(ab)^2x&=&[(ab)^*(ab)][abx]\\
 &=&\big((ab)^*(ab)\big)\big(aa^{\tiny\textcircled{g}}\big)\\
 &=&\big((ab)^*(ab)\big)\big(bb^{\#}\big)=(ab)^*(ab),\\
 \big((ab)^*(ab)^2x\big)^*&=&(ab)^*(ab)^2x,\\
 1-x(ab)&=&1-(b^{\#}a^{\tiny\textcircled{g}})(ab)=1-bb^{\#},\\
 (1-xab)(ab)^n&=&(1-bb^{\#})(ab)^n=(ab)^n-(aa^{\tiny\textcircled{g}})(ab)^n\\
 &=&(ab)^n-a(a^{\tiny\textcircled{g}}ab)(ab)^{n-1}\\
 &=&(ab)^n-(ab)(ab)^{n-1}=0.
 \end{array}$$
 Therefore $(ab)^{\tiny\textcircled{g}}=x,$ as desired.

$(3)\Rightarrow (1)$ By hypothesis, $a,ab\in R^{\tiny\textcircled{g}}, b\in R^{\#}, a^{\tiny\textcircled{g}}R=bR$ and $(ab)^{\tiny\textcircled{g}}=b^{\#}a^{\tiny\textcircled{g}}.$ Write $a^{\tiny\textcircled{g}}=br$ and $b=a^{\tiny\textcircled{g}}s$ for some $r,s\in R$.
Then $$\begin{array}{rll}
a^{\tiny\textcircled{g}}&=&a(a^{\tiny\textcircled{g}})^2=br(a^{\tiny\textcircled{g}})^2\\
&=&bb^{\#}a^{\tiny\textcircled{g}}=b(ab)^{\tiny\textcircled{g}}\\
&=&bb^{\#}a^{\tiny\textcircled{g}}=bb^{\#}a^{\tiny\textcircled{g}}aa^{\tiny\textcircled{g}}\\
&=&bb^{\#}a^{\tiny\textcircled{g}}bb^{\#}\in bR\bigcap Rb.
\end{array}$$ Since $a^{\tiny\textcircled{g}}R=bR$, we see that $a^{\tiny\textcircled{g}}ab=b$.
Then $$aa^{\tiny\textcircled{g}}b=aa^{\tiny\textcircled{g}}[a^{\tiny\textcircled{g}}ab]=[a(a^{\tiny\textcircled{g}})^2]a=a^{\tiny\textcircled{g}}ab=b.$$
Then we verify that $$baa^{\tiny\textcircled{g}}=babb^{\#}a^{\tiny\textcircled{g}}bb^{\#}=b(aa^{\tiny\textcircled{g}}b)b^{\#}=b^2b^{\#}=b.$$ Therefore $a^{\tiny\textcircled{g}}=a^{(b,b)}$, as asserted.\end{proof}

\begin{cor} Let $a,b\in R$. Then the following are equivalent:\end{cor}
\begin{enumerate}
\item [(1)] $a\in R_{b,b}^{\tiny\textcircled{g}}$.
\item [(2)] $a,ab\in R^{\tiny\textcircled{g}}, b\in R^{\#}$, $a^{\tiny\textcircled{g}}=b(ab)^{\tiny\textcircled{g}}, b^{\#}=(ab)^{\tiny\textcircled{g}}a^2a^{\tiny\textcircled{g}}$.
\end{enumerate}
\begin{proof} $(1)\Rightarrow (2)$ In view of Theorem 4.10, we have $a^{\tiny\textcircled{g}}R=bR, aa^{\tiny\textcircled{g}}=bb^{\#}$ and $(ab)^{\tiny\textcircled{g}}=b^{\#}a^{\tiny\textcircled{g}}=b^{\#}a^{\tiny\textcircled{g}}bb^{\#}$.
Then $b(ab)^{\tiny\textcircled{g}}=bb^{\#}a^{\tiny\textcircled{g}}bb^{\#}=a^{\tiny\textcircled{g}}bb^{\#}=a^{\tiny\textcircled{g}}aa^{\tiny\textcircled{g}}  =a^{\tiny\textcircled{g}}$. Moreover, we see that
$(ab)^{\tiny\textcircled{g}}ab=b^{\#}a^{\tiny\textcircled{g}}ab=b^{\#}a^{\tiny\textcircled{g}}=b^{\#}b$. Hence,
$$b^{\#}=(ab)^{\tiny\textcircled{g}}abb^{\#}=(ab)^{\tiny\textcircled{g}}a^2a^{\tiny\textcircled{g}}.$$

$(2)\Rightarrow (1)$ By hypothesis, we have $a^{\tiny\textcircled{g}}=b(ab)^{\tiny\textcircled{g}}, b^{\#}=(ab)^{\tiny\textcircled{g}}a^2a^{\tiny\textcircled{g}}$. For any $n\geq 2$, we have
$$a^{\tiny\textcircled{g}}a^2a^{\tiny\textcircled{g}}=a^{\tiny\textcircled{g}}a^{n+1}(a^{\tiny\textcircled{g}})^{n-1},$$ and then
$$||aa^{\tiny\textcircled{g}}-a^{\tiny\textcircled{g}}a^2a^{\tiny\textcircled{g}}||^{\frac{1}{n}}\leq ||a^n-a^{\tiny\textcircled{g}}a^{n+1}||^{\frac{1}{n}}||a^{\tiny\textcircled{g}}||.$$
Since $\lim\limits_{n\to \infty}||a^n-a^{\tiny\textcircled{g}}a^{n+1}||^{\frac{1}{n}}=0,$ we deduce that
$\lim\limits_{n\to \infty}||aa^{\tiny\textcircled{g}}-a^{\tiny\textcircled{g}}a^2a^{\tiny\textcircled{g}}||^{\frac{1}{n}}=0.$ This implies that
$aa^{\tiny\textcircled{g}}=a^{\tiny\textcircled{g}}a^2a^{\tiny\textcircled{g}}$.
Thus we derive
$$\begin{array}{rll}
bb^{\#}&=&b[(ab)^{\tiny\textcircled{g}}a^2a^{\tiny\textcircled{g}}]\\
&=&[b(ab)^{\tiny\textcircled{g}}]a^2a^{\tiny\textcircled{g}}\\
&=&a^{\tiny\textcircled{g}}a^2a^{\tiny\textcircled{g}}\\
&=&aa^{\tiny\textcircled{g}}.
\end{array}$$ Therefore we complete the proof by Theorem 4.10.\end{proof}

\end{document}